\title{Preperiodic points of polynomial dynamical systems over finite fields}
\author{Aaron Andersen\and Derek Garton}
\address{Fariborz Maseeh Department of Mathematics and Statistics, Portland State University}
\email{\href{mailto:aaander2@pdx.edu}{aaander2@pdx.edu},\ \href{mailto:gartondw@pdx.edu}{gartondw@pdx.edu}}
\date{\today}
\subjclass[2020]{Primary 37P05;
Secondary 37P25, 37P35, 11T06, 13B05}
\keywords{Arithmetic Dynamics, Periodic Points, Finite Fields, Galois Theory}
\newcommand{\Z}{\ensuremath{\mathbb{Z}}}
\renewcommand{\P}{\ensuremath{\mathbb{P}}}
\newcommand{\R}{\ensuremath{\mathbb{R}}}
\newcommand{\F}{\ensuremath{\mathbb{F}}}
\newcommand{\lv}{\ensuremath{\left\vert}}
\newcommand{\rv}{\ensuremath{\right\vert}}
\newcommand{\lp}{\ensuremath{\left(}}
\newcommand{\rp}{\ensuremath{\right)}}
\newcommand{\lb}{\ensuremath{\left\{}}
\newcommand{\rb}{\ensuremath{\right\}}}
\DeclareMathOperator{\Gal}{Gal}
\DeclareMathOperator{\Per}{Per}
\DeclareMathOperator{\id}{id}
\DeclareMathOperator{\Frac}{Frac}
\DeclareMathOperator{\Spec}{Spec}
\DeclareMathOperator{\fix}{f}
\theoremstyle{plain}
\newtheorem{theorem}{Theorem}[section]
\newtheorem{lemma}[theorem]{Lemma}
\newtheorem{corollary}[theorem]{Corollary}
\newtheorem{proposition}[theorem]{Proposition}
\newtheorem*{namedthm}{\namedthmname}
\newcounter{namedthm}
\theoremstyle{remark}
\newtheorem{remark}[theorem]{Remark}
\theoremstyle{definition}
\begin{document}
\begin{abstract}
For a prime $p$, positive integers $r,n$, and a polynomial $f$ with coefficients in $\F_{p^r}$, let $W_{p,r,n}(f)=f^n\lp \F_{p^r}\rp\setminus f^{n+1}\lp\F_{p^r}\rp$.
As $n$ varies, the $W_{p,r,n}(f)$ partition the set of strictly preperiodic points of the dynamical system induced by the action of $f$ on $\F_{p^r}$.
In this paper we compute statistics of strictly preperiodic points of dynamical systems induced by unicritical polynomials over finite fields by obtaining effective upper bounds for the proportion of $\F_{p^r}$ lying in a given $W_{p,r,n}(f)$.
Moreover, when we generalize our definition of $W_{p,r,n}(f)$, we obtain both upper and lower bounds for the resulting averages.
\end{abstract}

\maketitle
\tableofcontents

\section{Introduction}
\label{intro}

A \emph{\textup{(}discrete\textup{)} dynamical system} is a pair $\lp S,f\rp$ consisting of a set $S$ and a function $f\colon S\to S$.
For notational convenience, for any positive integer $n$, we let $f^n=\overbrace{f\circ\cdots\circ f}^{n\text{ times}}$; furthermore, we set $f^0=\id_S$. 
For any $s\in S$, if there is some positive integer $n$ such that $f^n(s)=s$, we say that $s$ is \emph{periodic} (for $f$).
Let $\Per{\lp S,f\rp}=\lb s\in S\mid s\text{ is periodic for }f\rb$.

When $S$ is a finite field, say $S=\F_q$ for some prime power $q$, and $f$ is a polynomial with coefficients in $\F_q$, a question arises: for $n\in\Z_{\geq0}$, what is the size of $f^n\lp\F_q\rp$?
This question has been studied, for example, in \cite{JKMT,HB,Juul,JuulP,G,GA}.
In each of these papers, the authors use the answers they find to address the related question: what is the size of $\Per{\lp\F_q,f\rp}$?
This is due to the fact that for any $n\in\Z_{\geq0}$, the set $f^n\lp\F_q\rp$ contains $\Per{\lp\F_q,f\rp}$---see~\cite[Lemma~5.2]{JKMT}.
Specifically, upper bounds on the size of $f^n\lp\F_q\rp$ yield upper bounds on the size  of $\Per{\lp\F_q,f\rp}$.

In this paper, we turn to the study of strictly preperiodic points.
If $(S,f)$ is a dynamical system and $s\in S$, we say that $s$ is \emph{strictly preperiodic} (for $f$) if $s$ is not periodic and there is some positive integer $n$ such that $f^n(s)$ is periodic.
Of course, when $S$ is finite, the strictly preperiodic points are precisely $S\setminus\Per{\lp S,f\rp}$.
In the finite case, we partition the strictly preperiodic points as follows: for a nonnegative integer $n$, let
\[
W_n\lp S , f \rp= f^n \lp S \rp\setminus f^{n+1}\lp S \rp.
\]
We prove in \cref{aaronlemma} that the nonempty $W_n\lp S , f \rp$ do indeed partition the strictly preperiodic points of $(S,f)$; see \cref{prettypicture} for an illustration of this phenomenon.
The purpose of this paper is to average the proportion of $S$ in these $W_n\lp S , f \rp$, as $f$ varies; so when $S$ is finite, let
\[
w_n\lp S , f \rp
=\frac{\lv W_n(S,f)\rv}
{\lv S\rv}.
\]
There is a natural generalization of this classification of strictly preperiodic points: for a dynamical system $(S,f)$ and integers $m,n$ with $n>m\geq0$, we define
\[
W_{m,n}\lp S,f \rp
=f^m(S)\setminus f^n(S).
\]
As above, when $S$ is finite, we write $w_{m,n}\lp S , f \rp=\lv W_{m,n}\lp S , f \rp\rv\cdot\lv S\rv^{-1}$.
Of course, it is clear from these definitions that $W_n\lp S , f \rp=W_{n,n+1}\lp S , f \rp$.

Before stating our results, we introduce one more bit of notation.
If $q$ is a prime power, $d\in\Z_{\geq2}$, and $\alpha\in \F_q$, we will write $f_{d,\alpha}=f_{d,\alpha}(x)=x^d+\alpha\in\F_q[x]$.
As these polynomials have only one critical point, they are examples of \emph{unicritical polynomials}; our main results hold for dynamical systems induced by such polynomials.
In \cref{upboundz}, we prove \cref{quadcor}, which is the $d=2$ case of the more-general \cref{upperboundforw}. 

\begin{corollary}\label{quadcor}
Suppose $p>3$ is prime.
Choose positive integers $r,n$ with $n>2$ and $\alpha\in\F_{p^r}$ with $\F_p\lp \alpha \rp = \F_{p^r}$.
If $r>2^{2n+3}$, then 
\[
w_n\lp \F_{p^r},f_{2,\alpha} \rp
<15\lp\frac{\log{n}}{n^2}\rp +\frac{32}{p^{r/2}}.
\]
\end{corollary}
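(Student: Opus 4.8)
The plan is to convert $w_n$ into a difference of iterate-image sizes, identify each image size with a fixed-point proportion of an arboreal monodromy group, use the three hypotheses to pin that group down, and close with a short recursion.

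Since $f_{2,\alpha}^{k+1}(\F_{p^r})\subseteq f_{2,\alpha}^{k}(\F_{p^r})$ (as recorded in \cref{aaronlemma}), one has $w_n\lp\F_{p^r},f_{2,\alpha}\rp=\bigl(\lv f_{2,\alpha}^{n}(\F_{p^r})\rv-\lv f_{2,\alpha}^{n+1}(\F_{p^r})\rv\bigr)/p^r$, so it suffices to estimate $\lv f_{2,\alpha}^{k}(\F_{p^r})\rv$ for $k\in\lb n,n+1\rb$. Fix such a $k$, put $g=f_{2,\alpha}^{k}$ (of degree $2^k$), and let $G_k=\Gal\lp g(x)-y\,/\,\F_{p^r}(y)\rp$, viewed as a permutation group on the $2^k$ roots of $g(x)-y$. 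For $y_0\in\F_{p^r}$ that is not a critical value of $g$, one has $y_0\in f_{2,\alpha}^{k}(\F_{p^r})$ exactly when the Frobenius conjugacy class at $y_0$ fixes a root. An effective Chebotarev density theorem over the function field $\F_{p^r}(y)$ then gives
\[
\lv f_{2,\alpha}^{k}(\F_{p^r})\rv=\phi_k\, p^r+E_k,\qquad \phi_k=\frac{\#\lb\sigma\in G_k:\sigma\text{ fixes a root}\rb}{\lv G_k\rv},
\]
with $\lv E_k\rv$ bounded in terms of $k$ and of the genus of the splitting field of $g(x)-y$ over $\F_{p^r}(y)$. The first subtlety is that this genus, and hence the bound for $\lv E_k\rv$, grows doubly exponentially in $k$, so one must check that $r>2^{2n+3}$---which forces $p^{r/2}$ to be doubly exponentially large in $n$---is enough to push $\bigl(\lv E_n\rv+\lv E_{n+1}\rv\bigr)/p^r$ below $13(\log n)/n^2+32/p^{r/2}$. (One could instead expand $\lv f_{2,\alpha}^k(\F_{p^r})\rv$ by inclusion-exclusion over the coincidence curves $\lb(x_1,\dots,x_j):g(x_1)=\cdots=g(x_j)\rb$ and apply Lang--Weil; the accounting is comparable.)

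Next I would pin down $G_k$. Let $W_k$ denote the $k$-fold iterated wreath product of $C_2$, i.e.\ the automorphism group of the complete rooted binary tree of depth $k$. For the two-variable family $f_{2,T}^{k}(x)-y$ over $\F_p(T,y)$, the splitting field has Galois group the full $W_k$ and no constant-field extension; for the latter one can note, for instance, that the critical value $f_{2,T}^{k}(0)$ appears to odd multiplicity in $\Disc_x\lp f_{2,T}^{k}(x)-y\rp$, so the quadratic subextension it cuts out is geometric. The set of specializations $T\mapsto\alpha$ for which this fails---critical-orbit collisions $f_{2,T}^{i}(0)=f_{2,T}^{j}(0)$, vanishing of discriminants, and the finitely many conditions witnessing a constant-field extension---is the zero set of a nonzero polynomial in $\F_p[T]$ of degree $\ll n^2 2^n<2^{2n+3}$. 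Since $\F_p(\alpha)=\F_{p^r}$ forces $\alpha$ to have degree exactly $r>2^{2n+3}$ over $\F_p$, it avoids this zero set, so $G_k=W_k$ for every $k\le n+1$ (at worst up to a bounded-index twist, which does not affect leading order). The hypothesis $p>3$ guarantees, among other things, the tameness used here and above, since $p\nmid 2^k=\deg g$ and $p\nmid 2^{2^k-1}=\lv W_k\rv$.

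Finally, with $G_k=W_k$, splitting an element according to whether it interchanges the two depth-one subtrees gives the recursion $\phi_k=\phi_{k-1}\lp1-\tfrac12\phi_{k-1}\rp$ with $\phi_0=1$; hence $\phi_n-\phi_{n+1}=\tfrac12\phi_n^2$ and, by an immediate induction, $\phi_n\le 2/(n+2)$, so $\phi_n-\phi_{n+1}<2/n^2$. Combining this with the error estimate above and with $2/n^2+13(\log n)/n^2\le 15(\log n)/n^2$ (valid for $n>2$) yields $w_n\lp\F_{p^r},f_{2,\alpha}\rp<15(\log n)/n^2+32/p^{r/2}$. The hard part is the interplay of the monodromy computation and the error analysis: showing the arboreal monodromy is the full wreath product for the single prescribed $\alpha$ (and not merely on average over $\alpha$, as in \cite{JKMT}), ruling out a constant-field extension, and then carrying the explicit but $n$-dependent Chebotarev error through the final inequality; the recursion itself is routine. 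The general \cref{upperboundforw} should then follow from the same template, with $C_2$ replaced by the wreath factor appropriate to a unicritical polynomial of degree $d$.
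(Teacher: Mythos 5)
Your proposal is sound in outline, but it takes a genuinely different route from the paper. In the paper, \cref{quadcor} is nothing more than a two-line numerical simplification of the $d=2$ case of \cref{upperboundforw} (whose hypothesis $r>2d^{2n+2}$ becomes exactly $r>2^{2n+3}$): since $n\geq 2$ one has $8<8\log(n+1)$, and since $n\geq 3$ one has $n+1<n^{3/2}$, which converts the bound of \cref{upperboundforw} into $15(\log n)/n^2+32/p^{r/2}$. All the dynamical content sits in \cref{technical}, which is proved by specializing the generic unicritical family $x^d+s$ via the effective specialization/height-bound machinery of \cite[Corollary~5.7]{G} and then quoting Juul's two-sided estimates for fixed-point proportions in iterated wreath products \cite[Proposition~4.2]{JuulP}. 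You instead rebuild this machinery directly for $d=2$: full geometric monodromy $[C_2]^k$ for $f_{2,\alpha}^k(x)-y$ via a Stoll-type non-collision criterion on the critical orbit, with the bad specializations cut out by a nonzero polynomial in $T$ of degree $\ll n^2 2^n<r$, so that $\alpha$ avoids them because $[\F_p(\alpha):\F_p]=r$; then effective Chebotarev over $\F_{p^r}(y)$ with a genus bound from Riemann--Hurwitz; then the exact recursion $\phi_k=\phi_{k-1}\lp 1-\tfrac12\phi_{k-1}\rp$ in place of Juul's inequalities. This works, and if carried out it even yields a sharper main term, of order $2/(n+2)^2$ rather than $15(\log n)/n^2$, precisely because the exact recursion in the full wreath product beats the two-sided estimates the paper routes through \cref{technical}. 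Two caveats for a complete write-up: the pivotal ingredients you assert---the criterion that distinctness of $f_{2,\alpha}(0),\dots,f_{2,\alpha}^k(0)$ forces full geometric wreath-product monodromy in odd characteristic (essentially Stoll's lemma; compare \cite{JKMT}), and an explicit function-field Chebotarev error in terms of the genus of the splitting field---are exactly the nontrivial content the paper outsources to \cite{G} and \cite{JuulP}, so they must be cited or proved rather than asserted; and once full geometric monodromy is in hand the arithmetic group is automatically all of $W_k$ with trivial constant-field extension, so your separate discriminant argument (and the hedge about a ``bounded-index twist,'' which as stated would not obviously preserve the fixed-point estimates) can be dropped.
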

Unlike previous work, we also obtain \emph{lower} bounds.
The work on periodic proportions previously mentioned uses only upper bounds on image size; \cref{aaroncorollary} follows from using both upper and lower bounds on image size (which we record in \cref{technical}).

\begin{corollary}\label{aaroncorollary}
Let $d\in\Z_{\geq2}$, and suppose $p$ is a prime satisfying $p>(d!)^2$ and $p\equiv1\pmod{d}$.
Choose $r,m,n\in\Z_{\geq1}$ with $5<m<n$, and $\alpha\in\F_{p^r}$ with $\F_p\lp \alpha \rp = \F_{p^r}$.
If $r>2d^{2n}$, then
\[
\frac{7}{8(d-1)}\lp\frac{1}{m}-\frac{1}{n}-\frac{4\log{m}}{mn}\rp-\frac{16d}{p^{r/2}}
<w_{m,n}(\F_{p^r},f_{d,\alpha})
<\frac{2}{d-1} \lp \frac{1}{m}-\frac{1}{n}
+\frac{4\log{n}}{mn} \rp +\frac{16d}{p^{r/2}}.
\]
\end{corollary}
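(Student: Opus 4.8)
The plan is to reduce the statement to two-sided estimates for the image proportions $|f_{d,\alpha}^k(\F_{p^r})|/p^r$ at the two levels $k=m$ and $k=n$, and then to combine those estimates.

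\emph{Reduction to image sizes.} Because $n>m$, one has $f_{d,\alpha}^n(\F_{p^r})=f_{d,\alpha}^m\bigl(f_{d,\alpha}^{n-m}(\F_{p^r})\bigr)\subseteq f_{d,\alpha}^m(\F_{p^r})$, so the set $W_{m,n}(\F_{p^r},f_{d,\alpha})=f_{d,\alpha}^m(\F_{p^r})\setminus f_{d,\alpha}^n(\F_{p^r})$ has cardinality $|f_{d,\alpha}^m(\F_{p^r})|-|f_{d,\alpha}^n(\F_{p^r})|$, and therefore
\[
w_{m,n}(\F_{p^r},f_{d,\alpha})=\frac{|f_{d,\alpha}^m(\F_{p^r})|}{p^r}-\frac{|f_{d,\alpha}^n(\F_{p^r})|}{p^r}.
\]
Thus it suffices to sandwich each of the two image proportions between explicit functions of the level.

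\emph{Applying the technical bounds.} The hypotheses $p>(d!)^2$, $p\equiv1\pmod d$, $\F_p(\alpha)=\F_{p^r}$, and $r>2d^{2n}$ (which dominates $r>2d^{2m}$ since $m<n$) are exactly what is needed to invoke the two-sided bounds on image proportions recorded in \cref{technical}, simultaneously at level $k=m$ and at level $k=n$. These give inequalities of the form
\[
L(k)-\frac{8d}{p^{r/2}}\ \le\ \frac{|f_{d,\alpha}^k(\F_{p^r})|}{p^r}\ \le\ U(k)+\frac{8d}{p^{r/2}}\qquad(k\in\{m,n\}),
\]
where $L(k)$ and $U(k)$ are the main-term bounds from \cref{technical}; morally both are of size $\tfrac{1}{(d-1)k}$, the $d-1$ being the variance of the ``number of preimages of a random point'' distribution of the governing critical branching process, and the gap between the constants $\tfrac{7}{8}$ and $2$ in the statement, together with the $\tfrac{\log}{mn}$ terms, comes from the slack in $L(k)$ and $U(k)$.

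\emph{Combining.} For the upper bound on $w_{m,n}$, feed in the upper estimate at level $m$ and the lower estimate at level $n$, obtaining $w_{m,n}(\F_{p^r},f_{d,\alpha})\le U(m)-L(n)+\tfrac{16d}{p^{r/2}}$; for the lower bound, reverse the roles to get $w_{m,n}(\F_{p^r},f_{d,\alpha})\ge L(m)-U(n)-\tfrac{16d}{p^{r/2}}$. It then remains to verify the purely numerical inequalities
\[
U(m)-L(n)\le\frac{2}{d-1}\Bigl(\frac1m-\frac1n+\frac{4\log n}{mn}\Bigr),\qquad L(m)-U(n)\ge\frac{7}{8(d-1)}\Bigl(\frac1m-\frac1n-\frac{4\log m}{mn}\Bigr).
\]
Here the hypothesis $5<m<n$ is used to absorb the lower-order contributions (the powers of $\tfrac{\log k}{k}$ inside $L(k)$ and $U(k)$, and the mismatch between their leading constants) into the single displayed $\tfrac{\log}{mn}$ term on each side.

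\emph{The main obstacle.} The substantive mathematics --- estimating $|f_{d,\alpha}^k(\F_{p^r})|/p^r$ by analysing the iterated Galois group of $f_{d,\alpha}$ and applying an effective Chebotarev-type count --- lives entirely in \cref{technical}, which I may assume. Consequently the only real work in this corollary is the bookkeeping of the last step: showing that the two-level combination of the somewhat lossy bounds $L(k),U(k)$ collapses to the clean expressions in the statement. The delicate points are that the logarithmic errors at the two different levels $m$ and $n$ must be merged into a single $\tfrac{\log}{mn}$ term, and that the resulting numerical inequalities have to hold uniformly over all $d\ge2$ and all $m>5$, which is precisely where the cushion in the constants $\tfrac{7}{8}$ and $2$ is spent.
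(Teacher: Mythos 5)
Your proposal is correct and follows essentially the same route as the paper, which deduces the corollary by applying \cref{technical} at levels $m$ and $n$ and combining the bounds (the upper half in \cref{upperboundforW}, the lower half in \cref{lowerboundforW}). The numerical inequalities you defer do hold exactly as you describe: the upper case uses $6<6\log n$ and $(m+1)(n+4+\log n)>mn$, and the lower case uses $6<6\log m$ together with $\frac{mn}{(n+1)(m+4+\log m)}>\frac{7}{16}$, which is where the hypothesis $5<m<n$ and the constant $\frac{7}{8}$ are spent.
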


In \cref{averaging}, we compute upper bounds on the statistics of strictly preperiodic points, averaging over all quadratic polynomials.
To do so, we use that fact that any quadratic polynomial (in odd characteristic) is conjugate to a unicritical polynomial.

\begin{theorem}\label{averageupperbound}
Suppose $p>3$ is prime.
Let $n,r\in\Z_{\geq 1}$.
If $n>133$ and $r>2^{2n+3}$, then
\[
\frac{1}
{\lv\lb f\in\F_{p^r}[x]
\mid\deg{f}=2\rb\rv}
\cdot\sum_{\substack{f\in\F_{p^r}[x]\\
\deg{f}=2}}
{w_n\lp \F_{p^r}, f\rp}
< \frac{1}{n^{3/2}}+\frac{34}{p^{r/2}}.
\]
\end{theorem}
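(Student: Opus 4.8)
The plan is to exploit the classical fact that, in odd characteristic, every degree-two polynomial is affinely conjugate to a unique unicritical polynomial $f_{2,\alpha}$; this reduces the average over all quadratics to an average of $w_n\lp\F_{p^r},f_{2,\alpha}\rp$ over $\alpha\in\F_{p^r}$, after which one handles the $\alpha$ generating $\F_{p^r}$ with the image-size input behind \cref{quadcor} and \cref{aaroncorollary}, and disposes of the remaining $\alpha$ with $w_n\leq1$.

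\emph{Step 1: conjugation does not change $w_n$, and every quadratic is conjugate to a unicritical one.} If $\varphi$ is an affine bijection of $\F_{p^r}$ and $g=\varphi^{-1}\circ f\circ\varphi$, then $g^{k}\lp\F_{p^r}\rp=\varphi^{-1}\lp f^{k}\lp\F_{p^r}\rp\rp$ for every $k\geq0$, so $\varphi$ induces bijections $W_{k}\lp\F_{p^r},g\rp\to W_{k}\lp\F_{p^r},f\rp$ and hence $w_n\lp\F_{p^r},g\rp=w_n\lp\F_{p^r},f\rp$. Completing the square, $f=ax^{2}+bx+c$ with $a\neq0$ is conjugate via $\varphi(x)=x/a-b/\lp2a\rp$ to $f_{2,\alpha}$ with $\alpha=ac-b^{2}/4+b/2$, and for each fixed $\alpha$ the triples $(a,b,c)$ with $a\neq0$ and $ac-b^{2}/4+b/2=\alpha$ are parametrized by $(a,b)\in\lp\F_{p^r}\setminus\lb0\rb\rp\times\F_{p^r}$, with $c$ forced; thus every fiber of $f\mapsto\alpha$ has size $\lp p^{r}-1\rp p^{r}$, and
\[ \frac{1}{\lv\lb f\in\F_{p^r}[x]\mid\deg f=2\rb\rv}\sum_{\substack{f\in\F_{p^r}[x]\\ \deg f=2}}w_n\lp\F_{p^r},f\rp=\frac{1}{p^{r}}\sum_{\alpha\in\F_{p^r}}w_n\lp\F_{p^r},f_{2,\alpha}\rp. \]

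\emph{Step 2: isolate the exceptional $\alpha$.} Let $A=\lb\alpha\in\F_{p^r}\mid\F_{p}\lp\alpha\rp=\F_{p^r}\rb$ and $A'=\F_{p^r}\setminus A$. Since $n>133$ and $r>2^{2n+3}$ — and $2^{2n+3}=2\cdot2^{2(n+1)}$ is exactly the hypothesis \cref{aaroncorollary} needs with $d=2$, $m=n$, and $n+1$ in place of its $n$ — the conclusions of \cref{quadcor} and \cref{aaroncorollary} hold for every $\alpha\in A$. For $\alpha\in A'$ one uses only $w_n\leq1$ together with $\lv A'\rv\leq\sum_{d\mid r,\ d<r}p^{d}<\tfrac{p}{p-1}p^{r/2}\leq\tfrac54 p^{r/2}$ (valid since $p\geq5$), so $A'$ contributes at most $\tfrac54 p^{-r/2}$ to the average; adding the $32p^{-r/2}$ that comes from $A$ leaves a total $p^{r/2}$-error below $34p^{-r/2}$, as required.

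\emph{Step 3: the main term, and the obstacle.} It remains to show the contribution of $A$ is at most $n^{-3/2}$, up to the error already counted. Appealing directly to \cref{quadcor} or \cref{aaroncorollary} is too lossy: their error terms carry a factor $\log n$, and $15\log n$ — indeed even $8\log n$ — stays above $n^{1/2}$ far past $n=133$. The fix is to average one step earlier. Writing $b_k\lp f\rp=\lv f^{k}\lp\F_{p^r}\rp\rv/p^{r}$ and $\beta_k=\lv A\rv^{-1}\sum_{\alpha\in A}b_k\lp f_{2,\alpha}\rp$, the contribution of $A$ equals $\tfrac{\lv A\rv}{p^{r}}\lp\beta_n-\beta_{n+1}\rp$, and the image-size estimates recorded in \cref{technical}, transported through the conjugacy reduction, give $\beta_k=\tfrac2k+O\!\lp\tfrac{\log k}{k^{2}}\rp$ with an error term regular enough that its consecutive differences are $O\!\lp k^{-3}\rp$ — the bound $r>2^{2n+3}$ being what makes the attendant Chebotarev error negligible. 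Because the leading $2/k$ cancels in $\beta_n-\beta_{n+1}$ and what remains of the error is a first difference, one obtains $\beta_n-\beta_{n+1}=\tfrac{2}{n\lp n+1\rp}+O\!\lp\tfrac{\log n}{n^{3}}\rp$, and since $2/n^{2}<n^{-3/2}$ already for $n>4$, the bound follows once $n$ is large enough that the explicit $O\!\lp(\log n)/n^{3}\rp$ term fits beneath the gap $\lp n^{1/2}-2\rp/n^{2}$. Making that constant explicit — and thereby justifying the exact threshold $n>133$ — is the step I expect to be the main obstacle; the conjugacy reduction and the estimate for $\lv A'\rv$ are routine by comparison.
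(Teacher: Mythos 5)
Your Steps 1 and 2 are exactly the paper's opening moves: the completing-the-square conjugation with $p^r(p^r-1)$-to-one fibers, the reduction to $\frac{1}{p^r}\sum_{\alpha}w_n\lp\F_{p^r},f_{2,\alpha}\rp$, and the splitting off of the non-generating $\alpha$ using $w_n\leq1$ and an $O(p^{r/2})$ count. The genuine gap is Step 3. Your key assertion---that the error in $\beta_k=\tfrac{2}{k}+O\lp\tfrac{\log k}{k^2}\rp$ is ``regular enough that its consecutive differences are $O(k^{-3})$,'' so that $\beta_n-\beta_{n+1}=\tfrac{2}{n(n+1)}+O\lp\tfrac{\log n}{n^3}\rp$---does not follow from \cref{technical} or from anything else in the paper. \cref{technical} only sandwiches each proportion $\lv f_{2,\alpha}^k\lp\F_{p^r}\rp\rv/p^r$ between the envelopes $\tfrac{2}{k+4+\log k}-\tfrac{16}{p^{r/2}}$ and $\tfrac{2}{k+1}+\tfrac{16}{p^{r/2}}$; the quantity it approximates, Juul's fixed-point proportion $\fix_k(\rho)$, is itself only known to within a window of width about $\log k/k^2$, and the $O(p^{-r/2})$ specialization error carries no regularity in $k$ whatsoever. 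Consequently the only available bound on a consecutive difference is (upper envelope at $k$) minus (lower envelope at $k+1$), which is precisely \cref{upperboundforw} and reproduces the $\log n/n^2$ term you set out to avoid. The telescoping step is therefore an unproven claim, and it sits at the heart of your argument; making it rigorous would require new input about $\fix_k(\rho)$, not a bookkeeping refinement.

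Moreover, the obstacle that drove you to this detour is not actually present in the paper's route. The paper never feeds the $15\log n/n^2$ bound of \cref{quadcor} into the average; it uses \cref{orderofgrowthforW}, i.e.\ the pointwise inequality $\frac{2\log(n+1)+8}{(n+1)(n+5+\log(n+1))}<\frac{1}{n^{3/2}}$, whose numerator $2\log(n+1)+8$ is much smaller than the $15\log n$ (or $8\log n$) you tested against, and the threshold $n>133$ is calibrated exactly so that this comparison holds (with the paper's normalization of $\log$; with natural logarithms the crossover occurs later, which may be why you judged the direct route hopeless). Granting that corollary, each generating $\alpha$ contributes less than $\frac{1}{n^{3/2}}+\frac{32}{p^{r/2}}$, and your own Step 2 bookkeeping then finishes the proof with room to spare inside $\frac{34}{p^{r/2}}$---no differencing of averages is needed. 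So either supply a proof of the second-order regularity you assert, or replace Step 3 by the pointwise bound of \cref{orderofgrowthforW}, which is how the paper proceeds.
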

\noindent Moreover, as in \cref{aaroncorollary}, we can obtain both lower and upper bounds for statistics of strictly preperiodic points by the using lower bounds on image sizes given in \cref{technical}.

\begin{corollary}\label{genheadintro}
Suppose $p>3$ is prime.
Let $r,m,n\in\Z_{\geq 1}$ with $5<m<n$.
If $r>2^{2n+1}$, then
\begin{align*}
\frac{7}{8}\lp\frac{1}{m} - \frac{1}{n} \rp-4\lp\frac{\log m}{mn}\rp
<\frac{1}
{\lv\lb f\in\F_{p^r}[x]
\mid\deg{f}=2\rb\rv}
\cdot\sum_{\substack{f\in\F_{p^r}[x]\\
\deg{f}=2}}
&{w_{m,n}\lp \F_{p^r}, f\rp}\\
&\qquad<2\lp\frac{1}{m} - \frac{1}{n}\rp
+9\lp\frac{\log n}{mn}\rp.
\end{align*}
\end{corollary}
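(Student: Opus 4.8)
The plan is to run the conjugacy reduction that proves \cref{averageupperbound} and then feed in the two-sided bounds of \cref{aaroncorollary}. Since $p$ is odd, every quadratic $f\in\F_{p^r}[x]$ is affinely conjugate to some $f_{2,\alpha}$, and $w_{m,n}$ is a conjugacy invariant: if $\phi$ is an affine bijection of $\F_{p^r}$ with $\phi^{-1}\circ f\circ\phi=g$, then $\phi^{-1}\circ f^k\circ\phi=g^k$ for every $k$, so $\phi$ carries $f^k\lp\F_{p^r}\rp$ bijectively onto $g^k\lp\F_{p^r}\rp$, and since $f^n\lp\F_{p^r}\rp\subseteq f^m\lp\F_{p^r}\rp$ we conclude $w_{m,n}\lp\F_{p^r},f\rp=w_{m,n}\lp\F_{p^r},g\rp$. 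Exactly as in the proof of \cref{averageupperbound}, the fibers of the map $f\mapsto f_{2,\alpha}$ all have the common size $\lp p^r-1\rp p^r$, so the averaged quantity in the statement equals $p^{-r}\sum_{\alpha\in\F_{p^r}}w_{m,n}\lp\F_{p^r},f_{2,\alpha}\rp$; it is this average over $\alpha$ that I would bound.

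I would split that sum according to whether $\F_p\lp\alpha\rp=\F_{p^r}$. For those $\alpha$ that generate $\F_{p^r}$, \cref{aaroncorollary} applies with $d=2$: its hypotheses hold because $p>3$ forces $p>(2!)^2$ and $p\equiv1\pmod 2$, because $5<m<n$, and because $r>2^{2n+1}=2\cdot 2^{2n}=2d^{2n}$. This bounds each such $w_{m,n}\lp\F_{p^r},f_{2,\alpha}\rp$ strictly between $L\colonequals\tfrac78\lp\tfrac1m-\tfrac1n-\tfrac{4\log m}{mn}\rp-\tfrac{32}{p^{r/2}}$ and $U\colonequals2\lp\tfrac1m-\tfrac1n+\tfrac{4\log n}{mn}\rp+\tfrac{32}{p^{r/2}}$. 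For the remaining $\alpha$---those lying in a proper subfield of $\F_{p^r}$---I would use only $0\le w_{m,n}\le 1$, together with the standard count that there are at most $\sum_{d\mid r,\,d<r}p^d<2p^{r/2}$ of them; note also that the number $N'$ of generating $\alpha$ satisfies $N'\ge p^r-2p^{r/2}>0$, so the strict bounds above are not vacuous. Combining the two ranges, the average over $\alpha$ is strictly larger than $\tfrac{N'}{p^r}L$ and strictly smaller than $\tfrac{N'}{p^r}U+\tfrac{2}{p^{r/2}}$; since $0\le\tfrac{N'}{p^r}\le 1$ and $U>0$ (and $\tfrac{N'}{p^r}L\ge L$ in the harmless case $L<0$), it lies strictly between $L-\tfrac{2}{p^{r/2}}$ and $U+\tfrac{2}{p^{r/2}}$.

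The final step is to absorb the errors of size $O\lp p^{-r/2}\rp$ into the $(\log m)/(mn)$ and $(\log n)/(mn)$ terms, and this is precisely where the hypothesis $r>2^{2n+1}$ does its work: $m>5$ forces $n\ge 7$, so $p^{r/2}>2^{2^{2n}}$, which is enormous compared with $mn$, giving $\tfrac{34}{p^{r/2}}<\tfrac{\log m}{2mn}$ and $\tfrac{34}{p^{r/2}}<\tfrac{\log n}{mn}$. Since $L-\tfrac{2}{p^{r/2}}=\tfrac78\lp\tfrac1m-\tfrac1n\rp-\tfrac{7\log m}{2mn}-\tfrac{34}{p^{r/2}}$, the first inequality makes this exceed $\tfrac78\lp\tfrac1m-\tfrac1n\rp-\tfrac{4\log m}{mn}$, and since $U+\tfrac{2}{p^{r/2}}=2\lp\tfrac1m-\tfrac1n\rp+\tfrac{8\log n}{mn}+\tfrac{34}{p^{r/2}}$, the second makes this less than $2\lp\tfrac1m-\tfrac1n\rp+\tfrac{9\log n}{mn}$. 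These are exactly the claimed bounds.

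I do not expect a genuine obstacle: the argument is bookkeeping layered onto \cref{aaroncorollary} and the conjugacy count. The one point that needs care is verifying that relaxing $\tfrac78\bigl(\tfrac1m-\tfrac1n-\tfrac{4\log m}{mn}\bigr)$ to $\tfrac78\bigl(\tfrac1m-\tfrac1n\bigr)-\tfrac{4\log m}{mn}$, and on the upper side $\tfrac{8\log n}{mn}$ to $\tfrac{9\log n}{mn}$, genuinely creates enough room to absorb both the proper-subfield error (at most $2/p^{r/2}$) and the $32/p^{r/2}$ error inherited from \cref{aaroncorollary}; the computation above isolates the two available slacks as $\tfrac{\log m}{2mn}$ and $\tfrac{\log n}{mn}$, and the doubly-exponential lower bound on $r$ makes both comfortably exceed $34/p^{r/2}$.
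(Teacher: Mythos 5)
Your proposal is correct and follows essentially the same route as the paper: the paper proves the two inequalities separately (\cref{genhead} and \cref{averagelowerboundmn}) via the same conjugacy reduction of \cref{averageupperbound}, the same split over primitive versus subfield $\alpha$, the pointwise bounds \cref{upperboundforW} and \cref{lowerboundforW} (whose conjunction is \cref{aaroncorollary}), and the same absorption of the $O\lp p^{-r/2}\rp$ errors using $r>2^{2n+1}$. Your handling of the sign of the lower bound and the explicit slack computation are just slightly more detailed versions of what the paper leaves implicit.
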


The organization of this paper is as follows.
In \cref{laries}, we prove basic facts about our partition of strictly preperiodic points, as well as the main technical tool needed for our applications, \cref{technical}, which gives an effective estimate of image sizes of polynomial dynamical systems.
In \cref{upboundz} and \cref{theorems}, we use \cref{technical} to prove the upper and lower bounds in \cref{aaroncorollary}, respectively.
Finally, in \cref{averaging}, we compute averages over all quadratic polynomials.

Before proceeding to \cref{laries}, we prove \cref{aaronlemma}.

\begin{lemma}\label{aaronlemma}
If $(S,f)$ is a dynamical system and $S$ is finite, then
\[
\lb W_n\lp S , f \rp\mid n\in\Z_{\geq0}\text{ and }W_n\lp S , f \rp\neq\emptyset\rb
\]
is a partition of the strictly preperiodic points of $(S,f)$.
\end{lemma}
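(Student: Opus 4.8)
The claim is that the nonempty sets $W_n(S,f)=f^n(S)\setminus f^{n+1}(S)$, as $n$ ranges over $\Z_{\geq0}$, partition the strictly preperiodic points of $(S,f)$. I would prove this in three movements: first establish that the images $f^n(S)$ form a weakly decreasing chain that eventually stabilizes; second, identify the stable value of that chain with $\Per(S,f)$; third, deduce that the ``differences'' $W_n(S,f)$ are pairwise disjoint and exhaust exactly the complement $S\setminus\Per(S,f)$, which by finiteness is the set of strictly preperiodic points.

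For the first movement: since $f(S)\subseteq S$, applying $f^n$ gives $f^{n+1}(S)\subseteq f^n(S)$, so $S=f^0(S)\supseteq f^1(S)\supseteq f^2(S)\supseteq\cdots$. Because $S$ is finite, this chain of cardinalities is a weakly decreasing sequence of nonnegative integers, hence eventually constant; let $N$ be minimal such that $f^{N+1}(S)=f^N(S)$, and set $P\colonequals f^N(S)$. A short induction shows $f^k(S)=P$ for all $k\geq N$: if $f^{k}(S)=f^{k-1}(S)$ for some $k\ge N+1$ then applying $f$ yields $f^{k+1}(S)=f^{k}(S)$.

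For the second movement I claim $P=\Per(S,f)$. For $\supseteq$: if $s$ is periodic with $f^m(s)=s$, then for any $j$ we have $s=f^{jm}(s)\in f^{jm}(S)$, and choosing $j$ large enough that $jm\geq N$ gives $s\in P$. For $\subseteq$: the restriction $f|_P\colon P\to P$ is well-defined since $f(P)=f(f^N(S))=f^{N+1}(S)=P$, so $f|_P$ is a surjection from the finite set $P$ to itself, hence a bijection; therefore some positive power of $f|_P$ is the identity on $P$ (the order of the permutation $f|_P$), so every element of $P$ is periodic for $f$. Combining, $P=\Per(S,f)$, and by finiteness $S\setminus P$ is exactly the set of strictly preperiodic points.

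For the third movement: the sets $W_n(S,f)=f^n(S)\setminus f^{n+1}(S)$ are pairwise disjoint because the chain is nested---if $m<n$ then $W_n(S,f)\subseteq f^n(S)\subseteq f^{m+1}(S)$, which is disjoint from $W_m(S,f)$. Their union is $\bigcup_{n\ge0}\bigl(f^n(S)\setminus f^{n+1}(S)\bigr)$; since the chain stabilizes at $P$ after finitely many steps, this telescoping union equals $f^0(S)\setminus\bigcap_{n}f^n(S)=S\setminus P$, which is precisely the set of strictly preperiodic points by the second movement. Discarding the empty $W_n(S,f)$ does not change the union, so the nonempty ones form a partition. None of the steps is a genuine obstacle---the only point requiring the slightest care is the use of finiteness twice (to force the chain to stabilize, and to make $f|_P$ a bijection); I would state those uses explicitly rather than hide them.
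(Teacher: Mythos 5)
Your proof is correct, but it follows a genuinely different route from the paper's. The paper argues pointwise: given a strictly preperiodic point $s_0$, it considers the set $P_{s_0}$ of all iterated preimages of $s_0$, uses non-periodicity to attach to each $s\in P_{s_0}$ a unique depth $n_s$ with $f^{n_s}(s)=s_0$, and then places $s_0$ in $W_{n_0}(S,f)$ where $n_0$ is the maximum of these depths (finiteness of $S$ guaranteeing the maximum exists); disjointness is handled exactly as you do, via the nesting $f^n(S)\subseteq f^{m+1}(S)$ for $n>m$. You instead argue globally: the descending chain $f^0(S)\supseteq f^1(S)\supseteq\cdots$ stabilizes at some $P=f^N(S)$, the restriction $f|_P$ is a surjection of a finite set and hence a permutation, so $P=\Per\lp S,f\rp$, and the telescoping union of the $W_n(S,f)$ is $S\setminus P$. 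Your version buys a cleaner structural statement (the eventual image \emph{is} the periodic set, which the paper only quotes from the literature in the introduction) and it explicitly checks the containment the paper leaves implicit, namely that no periodic point lies in any $W_n(S,f)$, so the union is exactly, not merely contains, the strictly preperiodic set. The paper's version buys slightly more local information: it exhibits, for each strictly preperiodic point, the specific index of its cell as the maximal preimage depth, without invoking the stabilization of the image chain. The only step you wave at rather than prove is that in a finite $S$ every non-periodic point is strictly preperiodic; this is a one-line pigeonhole observation that the paper itself takes for granted, so it is not a gap, though stating it would make your third movement airtight.
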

\begin{proof}
We begin by showing that the the sets $W_n(S,f)$ contain all strictly preperiodic points of $(S,f)$.
To this end, choose any strictly preperiodic point $s_0\in S$ and set
\[
P_{s_0}=\lb s\in S\mid\text{there exists }n\in\Z_{\geq0}\text{ such that }f^n(s)=s_0\rb.
\]
We claim that for any $s\in P_{s_0}$, there is a \emph{unique} $n\in\Z_{>0}$ such that $f^n(s)=s_0$.
Indeed, this follows from the fact that $s$ is not periodic.
For any $s\in P_{s_0}$, let $n_s$ be this positive integer.
Since $S$ is finite, we may set $n_0=\max{\lp\lb n_s\mid s\in P_{s_0}\rb\rp}$.
Then $s_0\in W_{n_0}(S,f)$.

To see that the $W_n(S,f)$ are pairwise disjoint, choose any $m,n\in\Z_{\geq0}$ with $n>m$.
Since $f^n(S)\subseteq f^m(S)$ and $f^{n+1}(S)\subseteq f^{m+1}(S)$, we see that
\[
W_m(S,f)\cap W_n(S,f)
=f^n(S)\setminus f^{m+1}(S)
=\emptyset.
\]
\end{proof}

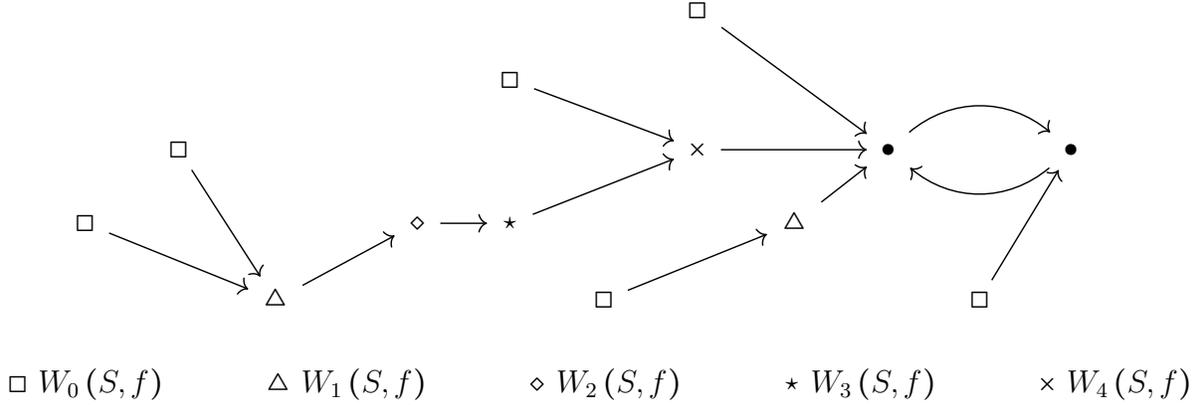
\begin{figure}
\caption{A partition of the strictly preperiodic points of a dynamical system $(S,f)$}\label{prettypicture}
\begin{center}
\begin{tikzcd}[row sep = 1.0 em,column sep = 1.5 em]
& & & & & & & \square \arrow[ddrr] & & & & & \\
& & & & & \square \arrow[drr] & & & & & & & \\
& \square \arrow[ddr] & & & &  & & \times \arrow[rr] & & \bullet \arrow[rr, bend left=40] & & \bullet \arrow[ll, bend left = 40] \\
\square \arrow[drr] & & & & \diamond \arrow[r] & \star \arrow[rru] & &  & \triangle\arrow[ru] &  & & & \\
& & \triangle \arrow[rru] & & & & \square\arrow[rru] &  & & & \square\arrow[ruu] &  & \\
\end{tikzcd}
\begin{tikzcd}
 \square \ W_0\lp S,f \rp
&\triangle \ W_1\lp S,f \rp
&\diamond \ W_2\lp S,f \rp
&\star \ W_3\lp S,f \rp
&\times \ W_4\lp S, f \rp
\end{tikzcd}
\end{center}
\end{figure}

\section{Preliminaries}\label{laries}

We begin this section by noting that for certain parameters, we need only elementary tools to compute statistics of strictly preperiodic points.
For example, the fact that for any odd prime power $q$, the number of squares in $\F_q$ is $\frac{1}{2}\lp q+1\rp$ yields \cref{squaresremark}.
\begin{remark}\label{squaresremark}
Suppose $q$ is an odd prime power and $\alpha\in\F_q$.
Then
\[
w_0\lp \F_q, f_{2,\alpha} \rp = \frac{1}{2}\lp1-\frac{1}{q}\rp.
\]
\end{remark}
\noindent Of course, \cref{squaresremark} immediately generalizes to \cref{powersremark}.





\begin{proposition}\label{powersremark}
Let $q$ be a prime power and $\alpha\in\F_q$.
Then for any $d\in\Z_{\geq1}$,
\[
w_0\lp \F_q, f_{d,\alpha} \rp
=\lp1-\frac{1}{\gcd{\lp q-1,d\rp}}\rp\lp1-\frac{1}{q}\rp.
\]
\end{proposition}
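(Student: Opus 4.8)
The plan is to compute $\lv f_{d,\alpha}\lp\F_q\rp\rv$ directly, since $W_0\lp\F_q,f_{d,\alpha}\rp=\F_q\setminus f_{d,\alpha}\lp\F_q\rp$ and hence $w_0\lp\F_q,f_{d,\alpha}\rp=1-\lv f_{d,\alpha}\lp\F_q\rp\rv\cdot q^{-1}$. Because $f_{d,\alpha}(x)=x^d+\alpha$ is just a translation of the $d$-th power map, we have $f_{d,\alpha}\lp\F_q\rp=\alpha+\lb x^d\mid x\in\F_q\rb$, so it suffices to count the $d$-th powers in $\F_q$, i.e.\ the image of the homomorphism $\varphi\colon\F_q^\times\to\F_q^\times$ given by $x\mapsto x^d$, together with the element $0$.

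First I would recall that $\F_q^\times$ is cyclic of order $q-1$, so the image of $\varphi$ is the unique subgroup of index $\gcd\lp q-1,d\rp$ in $\F_q^\times$; equivalently, $\lv\ker\varphi\rv=\gcd\lp q-1,d\rp$ and therefore $\lv\varphi\lp\F_q^\times\rp\rv=\lp q-1\rp/\gcd\lp q-1,d\rp$. Adjoining $0$ (which is $0^d$) gives $\lv\lb x^d\mid x\in\F_q\rb\rv=1+\lp q-1\rp/\gcd\lp q-1,d\rp$. Translating by $\alpha$ does not change cardinality, so $\lv f_{d,\alpha}\lp\F_q\rp\rv=1+\lp q-1\rp/\gcd\lp q-1,d\rp$.

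Finally I would substitute into $w_0\lp\F_q,f_{d,\alpha}\rp=1-\lv f_{d,\alpha}\lp\F_q\rp\rv\cdot q^{-1}$ and simplify: writing $g=\gcd\lp q-1,d\rp$, we get
\[
w_0\lp\F_q,f_{d,\alpha}\rp
=1-\frac{1}{q}-\frac{q-1}{gq}
=\frac{gq-g-q+1}{gq}
=\frac{\lp g-1\rp\lp q-1\rp}{gq}
=\lp1-\frac{1}{g}\rp\lp1-\frac{1}{q}\rp,
\]
which is the claimed formula. As a sanity check, when $d=2$ and $q$ is odd we have $g=2$, recovering \cref{squaresremark}. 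There is essentially no obstacle here; the only point requiring a word of care is the bookkeeping around the element $0$ (it lies in every image $f_{d,\alpha}\lp\F_q\rp$ after translation, since $0=f_{d,\alpha}\lp\beta\rp-\alpha$ for a suitable $\beta$, but it is cleanest to count $d$-th powers in $\F_q$ as "$1$ plus the $d$-th powers in $\F_q^\times$" before translating), and confirming that the index-$\gcd$ description of $\varphi\lp\F_q^\times\rp$ is exactly what makes the $\gcd\lp q-1,d\rp$ appear.
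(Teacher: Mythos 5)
Your proof is correct and follows essentially the same route as the paper: count the $d$-th powers in $\F_q$ (namely $1+\frac{q-1}{\gcd(q-1,d)}$ of them), observe that translation by $\alpha$ is a bijection onto $f_{d,\alpha}\lp\F_q\rp$, and compute $w_0=1-\lv f_{d,\alpha}\lp\F_q\rp\rv\cdot q^{-1}$. The only difference is that you write out the cyclic-group and algebraic simplification details that the paper leaves implicit.
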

\begin{proof}
Indeed, consider the bijection
\begin{align*}
\lp\F_q\rp^d&\mapsto f_{d,\alpha}\lp\F_q\rp\\
\beta&\mapsto\beta+\alpha,
\end{align*}
then use the fact that $\lp\F_q\rp^d$ has size $1+\frac{q-1}{\gcd{(q-1,d)}}$.

\end{proof}

The main technical tool we will use in proving our main results is \cref{technical}.

\begin{proposition}\label{technical}
Let $d\in\Z_{\geq2}$, and suppose $p$ is a prime that satisfies $p>(d!)^2$ and $p\equiv1\pmod{d}$.
Choose $r,n\in\Z_{\geq1}$.
If $r>2d^{2n}$, then for all $\alpha\in\F_{p^r}$ with $\F_p\lp \alpha \rp = \F_{p^r}$,
%
%
\[
\frac{2}{(d-1)(n+4+\log n)}-\frac{8d}{p^{r/2}}
<\frac{\lv f_{d,\alpha}^n\lp \F_{p^r} \rp\rv}{p^r}
<\frac{2}{(d-1)(n+1)}+\frac{8d}{p^{r/2}}.
\]
\end{proposition}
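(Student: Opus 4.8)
The plan is to reduce the estimate to a combinatorial recursion for a ``fixed-point proportion'' attached to the arboreal Galois group of $f_{d,\alpha}$, and then to solve that recursion explicitly. Write $q=p^r$ and $s_n=\lv f_{d,\alpha}^n(\F_q)\rv/q$. An element $\beta\in\F_q$ lies in $f_{d,\alpha}^n(\F_q)$ precisely when $f_{d,\alpha}^n(x)-\beta$ has a root in $\F_q$, so by an effective Chebotarev density theorem for the function field $\F_q(t)$ — equivalently, Weil/Lang--Weil bounds on the fiber-product curves $\lb(x_1,\dots,x_k)\mid f_{d,\alpha}^n(x_1)=\cdots=f_{d,\alpha}^n(x_k)\rb$ assembled through an inclusion--exclusion over $k$ — the quantity $s_n$ equals the proportion of elements of $G_n=\Gal\lp f_{d,\alpha}^n(x)-t/\F_q(t)\rp$ that fix at least one of the $d^n$ roots, up to an error of size $O(d/q^{1/2})$. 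Since $p\equiv1\pmod d$, the field $\F_q$ contains the $d$-th roots of unity, so the arithmetic and geometric monodromy groups coincide and the top-level action is the regular action of $C_d$; the hypothesis $p>(d!)^2$ guarantees that $f_{d,\alpha}^n(x)-t$ is separable with tame ramification, so the monodromy computation is the characteristic-zero one.

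The second step is to pin down $G_n$. Over an algebraic closure, a post-critically generic parameter makes $G_n$ the full $n$-fold iterated wreath product of $C_d$ acting on the $d^n$ leaves of the $d$-ary rooted tree; the parameters for which $G_n$ is a proper subgroup satisfy a fixed nonzero polynomial over $\F_p$ of degree at most $2d^{2n}$, coming from the relevant iterated resultant/discriminant (here $p>(d!)^2$ is used again, to ensure this polynomial is not identically zero modulo $p$). Since $\F_p(\alpha)=\F_{p^r}$ forces $[\F_p(\alpha):\F_p]=r>2d^{2n}$, such an $\alpha$ cannot be a root of a polynomial of that degree, so $G_n$ is the full iterated wreath product.

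The third step is to solve the recursion. Let $p_n$ denote the fixed-point proportion of the $n$-fold iterated wreath product of $C_d$. A nonidentity element of $C_d$ in its regular action has no fixed point, so an element of $C_d\wr[C_d]^{n-1}$ has a fixed leaf if and only if its top component is trivial (probability $1/d$) and at least one of the $d$ independent copies of $[C_d]^{n-1}$ below it has a fixed leaf; hence
\[
p_n=\frac1d\lp1-(1-p_{n-1})^d\rp,\qquad p_1=\frac1d.
\]
(The same recursion is visible dynamically: $f_{d,\alpha}(A)=\lb a^d+\alpha\mid a\in A\rb$ and $(\F_q^\times)^d$ is a subgroup of index $d$ each of whose elements has exactly $d$ preimages under $x\mapsto x^d$, so for ``random'' $A$ one expects $\lv f_{d,\alpha}(A)\rv\approx\tfrac qd\bigl(1-(1-\lv A\rv/q)^d\bigr)$.) Using the Bonferroni inequalities $dp-\binom d2p^2\le1-(1-p)^d\le dp-\binom d2p^2+\binom d3p^3$ for $0<p<1$, an induction on $n$ gives
\[
\frac{d-1}{2}(n+1)<\frac1{p_n}<\frac{d-1}{2}\lp n+4+\log n\rp,
\]
the $\log n$ absorbing the accumulated second-order terms; combining this with $s_n=p_n+O(d/q^{1/2})$ and tracking constants — the contribution of the branch locus and of the tail of the inclusion--exclusion are negligible since $q>p^{2d^{2n}}$ — yields the displayed bound with error $8d/p^{r/2}$.

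I expect the main obstacle to be the effective error term: obtaining a bound of the shape $8d/p^{r/2}$ with a constant independent of $n$ requires organizing the point count so that only auxiliary varieties of bounded degree enter the Weil estimates, rather than the degree-$d^n$ Galois closure whose genus grows with $n$, and then exploiting the enormous lower bound $r>2d^{2n}$ to swallow every $n$-dependent loss. Making the ``bad locus has degree at most $2d^{2n}$'' statement precise — hence the maximality of $G_n$ for the specific $\alpha$ at hand — is the other substantial ingredient.
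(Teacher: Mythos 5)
Your proposal follows the same conceptual pipeline as the paper — identify $\lv f_{d,\alpha}^n(\F_{p^r})\rv/p^r$ with the fixed-point proportion of the $n$-fold iterated wreath product of the regular $C_d$ up to an error swallowed by the huge value of $r$, then bound that proportion via the recursion $p_n=\tfrac1d\lp1-(1-p_{n-1})^d\rp$ — but the paper's actual proof is much shorter because both quantitative inputs are citations: the comparison between image proportion and wreath-product fixed-point proportion, with the explicit $8d/p^{r/2}$ error, is \cite[Corollary~5.7]{G} applied to the generic family $x^d+s$ over $\F_p[s]$ and specialized at the minimal polynomial $\pi(s)$ of $\alpha$, and the bounds $\tfrac{d-1}{2}(n+1)<1/p_n<\tfrac{d-1}{2}(n+4+\log n)$ are exactly \cite[Proposition~4.2]{JuulP}. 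The mechanisms also differ at one real point: in the paper the hypothesis $r>2d^{2n}$ arises from the height bound of \cite[Definition~4.2]{G} attached to the valuation at $\pi(s)$ (a $\log\log$-type condition guaranteeing that the specialization machinery applies through the $n$th iterate), not from your proposed ``bad locus of degree at most $2d^{2n}$'' count; your version of that step is plausible (a postcritical-orbit--distinctness locus of degree roughly $nd^{n}$ would suffice and fits under your bound, and $\F_p(\alpha)=\F_{p^r}$ with $r>2d^{2n}$ then forces maximality), but as written it is asserted rather than proved, and you yourself flag it and the $n$-independent error constant as the two open obstacles — these are precisely the content of the cited results. Two smaller caveats: tameness/separability already follows from $p\equiv1\pmod d$ (the role of $p>(d!)^2$ in \cite{G} is different), and ``$\mu_d\subset\F_q$ implies arithmetic equals geometric monodromy'' needs the supplementary observation that the iterated Kummer structure caps the arithmetic group at $[C_d]^n$ (the paper handles the no-constant-extension issue via the argument of \cite[Theorem~1.2]{G}). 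So: same strategy, but your route would require reproving the two external effective results, whereas the paper's proof consists of verifying their hypotheses.
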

\begin{proof}
Let $R=\F_p[s]$ and $\phi(x)=x^d+s\in R[x]$.
We will apply \cite[Corollary~5.7]{G} to the dynamical system $(R,\phi)$.
To do so, we set $f(x)=\phi(x)-t\in R[t,x]$ and $K=\Frac(R[t])$, then write $L$ for the splitting field of $f(x)$ over $K$, write $B$ for the integral closure of $R[t]$ in $L$, write $G$ for $\Gal{(L/K)}$, and write $\rho$ for the action of $G$ on the roots of $f(x)$ in $B$.
Let $\pi(s)\in R$ be the minimal polynomial for $\alpha$ over $\F_p$, so that $\deg{\lp\pi(s)\rp}=r$ by hypothesis.
Since $p\equiv1\pmod{d}$, we see that $\Frac{\lp B/\pi(s)B\rp}/\Frac{\lp R[t]/\pi(s) R[t]\rp}$ is Galois with
\[
\Gal{\lp\Frac{\lp B/\pi(s)B\rp}/\Frac{\lp R[t]/\pi(s) R[t]\rp}\rp}
\simeq G\simeq\Z/d\Z.
\]
Moreover, as in the proof of \cite[Theorem~1.2]{G}, we know that $R/\pi(s)R$ is algebraically closed in $\Frac{\lp B/\pi(s)B\rp}$.


Let's write $S$ for the set of roots of $f(x)$ in $B$.
Let $[\rho]^n$ be the $n$th iterated wreath product of the action $\rho$; this is an action of the $n$th iterated wreath product of the group $G$ (denoted by $[G]^n$) on the set $S^n$ (see \cite[Section~5]{G} for more details).
Using this notation, let $\fix_n{\lp\rho\rp}$ be the proportion of $[G]^n$ with a fixed point under the action of $[\rho]^n$.
We are now in a position to apply \cite[Corollary~5.7]{G}.
Since $\phi$ is unicritical with critical point 0, \cite[Corollary~5.7]{G} holds for $n$ at most
\[
\left\lfloor\frac{\log\lp \log \lp p^r \rp \rp-\log{\lp\log{\lp p^2\rp}\rp}}{2\log d}\right\rfloor;
\]
this constraint follows by computing the height bound given in \cite[Definition~4.2]{G}, applied to the valuation on $\Frac{(R)}$ given by $\pi(s)$.
Since $\F_p(\alpha)=\F_{p^r}$, our hypothesis on $r=\deg{\lp\pi(s)\rp}$ ensures that $n$ satisfies this bound.
Therefore, noting that the specialization of $\phi$ at $\pi(s)R\in\Spec{(R)}$ is $f_{d,\alpha}$, we may apply \cite[Corollary~5.7]{G}.
However, since \cite[Corollary~5.7]{G} applies to $f_{d,\alpha}$ acting on $\P^1\lp\F_{p^r}\rp$, we must slightly adjust the constants appearing in the statement of that Corollary; using the inefficient estimate $1<dp^{r/d}(p^r+1)$, this adjustment yields
\[
\fix_n{(\rho)}-\frac{8d}{p^{r/2}}
<\frac{\lv f_{d,\alpha}^n\lp \F_{p^r} \rp\rv}{p^r}
<\fix_n{(\rho)}+\frac{8d}{p^{r/2}}.
\]
The result now follows by applying Juul's estimates on fixed point proportions in wreath products~\cite[Proposition~4.2]{JuulP}.
\end{proof}

\section{Effective upper bounds}\label{upboundz}

With \cref{technical} in hand, we proceed to proving the upper bounds on strictly preperiodic points mentioned in \cref{intro}.
Indeed, \cref{technical} immediately implies \cref{upperboundforw}.

\begin{proposition}\label{upperboundforw}
Let $d\in\Z_{\geq2}$, and suppose $p$ is a prime that satisfies $p>(d!)^2$ and $p\equiv1\pmod{d}$.
Choose $r,n\in\Z_{\geq1}$ and $\alpha\in\F_{p^r}$ with $\F_p\lp \alpha \rp = \F_{p^r}$.
If $r>2d^{2n+2}$, then 
\[
w_n\lp \F_{p^r}, f_{d,\alpha} \rp
<\frac{2\log{(n+1)}+8}{\lp d-1 \rp\lp n+1 \rp \lp n +5+\log{\lp n +1\rp}\rp}
+\frac{16d}{p^{r/2}}.
\]
\end{proposition}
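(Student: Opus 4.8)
The plan is to deduce this directly from \cref{technical}. Since $f_{d,\alpha}^{n+1}\lp\F_{p^r}\rp\subseteq f_{d,\alpha}^{n}\lp\F_{p^r}\rp$, the definition of $W_n$ gives
\[
w_n\lp\F_{p^r},f_{d,\alpha}\rp=\frac{\lv f_{d,\alpha}^{n}\lp\F_{p^r}\rp\rv}{p^r}-\frac{\lv f_{d,\alpha}^{n+1}\lp\F_{p^r}\rp\rv}{p^r},
\]
so an upper bound for $w_n$ will follow from an upper bound for the first term together with a lower bound for the second.

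Next I would verify that \cref{technical} applies at both levels $n$ and $n+1$. The conditions on $p$ and on $\alpha$ are exactly those in its hypothesis, and the assumption $r>2d^{2n+2}=2d^{2(n+1)}$ implies both $r>2d^{2n}$ and $r>2d^{2(n+1)}$. Pairing the upper estimate of \cref{technical} at level $n$ with the lower estimate at level $n+1$ then gives
\[
w_n\lp\F_{p^r},f_{d,\alpha}\rp<\frac{2}{(d-1)(n+1)}-\frac{2}{(d-1)\lp(n+1)+4+\log(n+1)\rp}+\frac{16d}{p^{r/2}}.
\]
Finally, combining the two fractions over the common denominator $(d-1)(n+1)\lp n+5+\log(n+1)\rp$ leaves the numerator $2\lp(n+5+\log(n+1))-(n+1)\rp=2\log(n+1)+8$, which is exactly the claimed bound.

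I do not expect a genuine obstacle here: the argument is pure bookkeeping once \cref{technical} is available. The only things to be careful about are using a \emph{lower} bound (not a second upper bound) for the level-$(n+1)$ image, and recognizing that the hypothesis $r>2d^{2n+2}$ is precisely what is needed to license the application of \cref{technical} at level $n+1$.
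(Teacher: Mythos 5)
Your proposal is correct and matches the paper's own argument: the paper likewise applies \cref{technical} with the upper estimate at level $n$ and the lower estimate at level $n+1$, leaving the subtraction and common-denominator algebra implicit. Your added bookkeeping (the containment $f_{d,\alpha}^{n+1}(\F_{p^r})\subseteq f_{d,\alpha}^{n}(\F_{p^r})$ and the check that $r>2d^{2n+2}$ licenses both applications) is exactly the right justification.
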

\begin{proof}
\cref{technical} tells us that
\[
\frac{\left| f_{d,\alpha}^n\lp \F_{p^r} \rp \right|}{p^r}
< \frac{2}{\lp d-1 \rp\lp n+1 \rp} + \frac{8d}{p^{r/2}}
\hspace{15px}\text{and}\hspace{15px}
\frac{\left| f_{d,\alpha}^{n+1}\lp \F_{p^r} \rp \right|}{p^r}
>\frac{2}{\lp d-1 \rp\lp n+5+\log\lp n+1 \rp\rp}-\frac{8d}{p^{r/2}}.
\]
\end{proof}
\noindent We are now in a position to prove \cref{quadcor}, which we mentioned in \cref{intro}.
It is a simplification of the quadratic case of \cref{upperboundforw}.
(In \cref{orderofgrowthforW}, we present an even cruder simplification, which we will apply in our proof of \cref{averageupperbound}.)

\begin{proof}[Proof of \cref{quadcor}]
Since $2\leq n$, we know that $8<8\log{(n+1)}$, so that
\[
\frac{2\log{(n+1)}+8}{(n+1)(n+5+\log{(n+1)})}
<10\lp\frac{\log{(n+1)}}{n^2}\rp.
\]
Moreover, the fact that $3\leq n$ implies $n+1<n^{3/2}$, which tells us that
\[
10\lp\frac{\log{(n+1)}}{n^2}\rp+\frac{32}{p^{r/2}}
<10\lp\frac{\log{\lp n^{3/2}\rp}}{n^2}\rp+\frac{32}{p^{r/2}}
=15\lp\frac{\log{n}}{n^2}\rp+\frac{32}{p^{r/2}}.
\]
\end{proof}

\cref{technical} enables us to find upper bounds not just on sets of the form $W_n\lp\F_{p^r},f_{2,\alpha}\rp$, but also for the generalized sets $W_{m,n}\lp\F_{p^r},f_{d,\alpha}\rp$ for $d\in\Z_{\geq2}$.

\begin{theorem}\label{upperboundforW}
Let $d\in\Z_{\geq2}$, and suppose $p$ is a prime that satisfies $p>(d!)^2$ and $p\equiv1\pmod{d}$.
Choose $r,m,n\in\Z_{\geq1}$ with $1<m<n$, and $\alpha\in\F_{p^r}$ with $\F_p\lp \alpha \rp = \F_{p^r}$.
If $r>2d^{2n}$, then
\[
w_{m,n}\lp \F_{p^r},f_{d,\alpha}\rp
<\frac{2}{d-1} \lp \frac{1}{m}-\frac{1}{n}
+\frac{4\log{n}}{mn} \rp +\frac{16d}{p^{r/2}}.
\]
\end{theorem}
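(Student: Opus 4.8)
The plan is to write $w_{m,n}$ as a difference of two image proportions and bound each with \cref{technical}. Since $m<n$, the chain $\F_{p^r}\supseteq f_{d,\alpha}\lp\F_{p^r}\rp\supseteq f_{d,\alpha}^{2}\lp\F_{p^r}\rp\supseteq\cdots$ gives $f_{d,\alpha}^{n}\lp\F_{p^r}\rp\subseteq f_{d,\alpha}^{m}\lp\F_{p^r}\rp$, so directly from the definition of $W_{m,n}$,
\[
w_{m,n}\lp\F_{p^r},f_{d,\alpha}\rp=\frac{\lv f_{d,\alpha}^{m}\lp\F_{p^r}\rp\rv-\lv f_{d,\alpha}^{n}\lp\F_{p^r}\rp\rv}{p^{r}}.
\]

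Next I would invoke \cref{technical} twice. Its hypotheses $p>(d!)^{2}$ and $p\equiv1\pmod d$ are assumed, and since $m<n$ we have $r>2d^{2n}\geq 2d^{2m}$, so the proposition applies to both $f_{d,\alpha}^{m}$ and $f_{d,\alpha}^{n}$. Combining the upper bound it gives for $\lv f_{d,\alpha}^{m}\lp\F_{p^r}\rp\rv/p^{r}$ with the lower bound for $\lv f_{d,\alpha}^{n}\lp\F_{p^r}\rp\rv/p^{r}$ yields
\[
w_{m,n}\lp\F_{p^r},f_{d,\alpha}\rp<\frac{2}{\lp d-1\rp\lp m+1\rp}-\frac{2}{\lp d-1\rp\lp n+4+\log n\rp}+\frac{16d}{p^{r/2}}.
\]
It then remains to prove the elementary inequality
\[
\frac{1}{m+1}-\frac{1}{n+4+\log n}\leq\frac{1}{m}-\frac{1}{n}+\frac{4\log n}{mn},
\]
since multiplying by $2/(d-1)$ and adding $16d/p^{r/2}$ then produces the asserted bound.

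For this last inequality I would add together two simpler estimates: $\tfrac{1}{m+1}<\tfrac{1}{m}$, which is immediate, and $\tfrac{1}{n}-\tfrac{1}{n+4+\log n}\leq\tfrac{4\log n}{mn}$. Writing the left side of the latter as $\dfrac{4+\log n}{n\lp n+4+\log n\rp}$ and bounding $n+4+\log n$ below by $n$ reduces it to $m\lp 4+\log n\rp\leq 4n\log n$; since $m<n$, this in turn follows from $4+\log n\leq 4\log n$, which holds as soon as $\log n\geq 4/3$, i.e.\ $n\geq 4$, leaving only the case $\lp m,n\rp=\lp 2,3\rp$, which one checks numerically. The sole point requiring care is this final numerical inequality for small $n$, where the coarse bound $4+\log n\leq 4\log n$ fails; the constraint $m\leq n-1$ supplies just enough slack. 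Everything else is bookkeeping built on \cref{technical}.
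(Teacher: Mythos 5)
Your proposal is correct and takes essentially the same route as the paper: both write $w_{m,n}$ as a difference of image proportions, apply \cref{technical} to get the upper bound for the $m$-th image and the lower bound for the $n$-th image (noting $r>2d^{2n}>2d^{2m}$), and then reduce to an elementary inequality. The only difference is bookkeeping in that last step—the paper combines the fractions over a common denominator and uses $6<6\log n$, whereas you compare termwise and check the case $(m,n)=(2,3)$ separately—and both versions are valid.
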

\begin{proof}
Using \cref{technical} as in \cref{upperboundforw}, we see that
\begin{align*}
&w_{m,n}\lp \F_{p^r},f_{d,\alpha}\rp\\
&\qquad<\lp \frac{2}{\lp d-1 \rp\lp m+1 \rp}
+\frac{8d}{p^{r/2}} \rp
- \lp \frac{2}{\lp d-1\rp \lp n+4+\log n \rp}
-\frac{8d}{p^{r/2}} \rp\\
&\qquad= \frac{2n-2m +2\log n +6}{\lp d-1 \rp\lp m+1\rp\lp n+4+\log n \rp}+\frac{16d}{p^{r/2}}\\
&\qquad<\frac{2n - 2m + 8\log{n}}{(d-1) mn}+\frac{16d}{p^r}
&&\text{(since $6<6\log{n}$)}\\
&\qquad= \frac{2}{d-1} \lp \frac{1}{m}-\frac{1}{n} +\frac{4\log n}{mn} \rp +\frac{16d}{p^{r/2}}.
\end{align*}
\end{proof}
\noindent We remark that \cref{upperboundforW} establishes one half of \cref{aaroncorollary}.

\section{Effective lower bounds}
\label{theorems}

We proceed to proving the lower bound of \cref{aaroncorollary}.

\begin{proposition}\label{lowerboundforW}
Let $d\in\Z_{\geq2}$, and suppose $p$ is a prime that satisfies $p>(d!)^2$ and $p\equiv1\pmod{d}$.
Choose $r,m,n\in\Z_{\geq1}$ with $5<m<n$, and $\alpha\in\F_{p^r}$ with $\F_p\lp \alpha \rp = \F_{p^r}$.
If $r>2d^{2n}$, then
\[
w_{m,n}\lp \F_{p^r},f_{d,\alpha}\rp
>\frac{7}{8(d-1)}\lp\frac{1}{m}-\frac{1}{n}-\frac{4\log{m}}{mn}\rp-\frac{16d}{p^{r/2}}.
\]
\end{proposition}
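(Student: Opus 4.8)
The plan is to deduce \cref{lowerboundforW} from \cref{technical} in the same way \cref{upperboundforW} was, and then to reduce everything to an elementary inequality in $m$ and $n$. Since $d\geq2$ and $m<n$, we have $r>2d^{2n}>2d^{2m}$, so (with the standing hypotheses $p>(d!)^2$, $p\equiv1\pmod{d}$, and $\F_p\lp\alpha\rp=\F_{p^r}$) \cref{technical} applies at levels $m$ and $n$. Subtracting its upper bound for $\lv f_{d,\alpha}^n\lp\F_{p^r}\rp\rv\cdot p^{-r}$ from its lower bound for $\lv f_{d,\alpha}^m\lp\F_{p^r}\rp\rv\cdot p^{-r}$ gives
\[
w_{m,n}\lp\F_{p^r},f_{d,\alpha}\rp>\frac{2}{\lp d-1\rp\lp m+4+\log m\rp}-\frac{2}{\lp d-1\rp\lp n+1\rp}-\frac{16d}{p^{r/2}}.
\]
The error term already matches the one in the statement, so the proposition reduces to the $d$-free inequality $\frac{2}{m+4+\log m}-\frac{2}{n+1}\geq\frac{7}{8}\lp\frac1m-\frac1n-\frac{4\log m}{mn}\rp$ for integers $n>m>5$, i.e., after clearing denominators, to
\[
16mn\lp n-m-3-\log m\rp\geq7\lp n-m-4\log m\rp\lp m+4+\log m\rp\lp n+1\rp.
\]

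One proceeds by cases on $n$. When $n\geq m+4\log m$, the quantities $n-m-4\log m$ and $n-m-3-\log m$ are nonnegative with the latter at least the former (since $\log m>1$ for $m\geq6$), so it suffices to show $16mn\geq7\lp m+4+\log m\rp\lp n+1\rp$; and indeed $m+4+\log m\leq2m$ because $\log m\leq m-4$ for $m\geq6$, while $16mn\geq14m\lp n+1\rp$ because $n\geq7$. When $m+3+\log m\leq n<m+4\log m$, the left side is nonnegative and the right side negative, so there is nothing to prove. The delicate range is $m+1\leq n<m+3+\log m$, where both sides are negative: setting $t=m+3+\log m-n\in\lp0,2+\log m\rc$ and comparing absolute values, the inequality reduces---upon cancelling $t$, using $\tfrac{3\log m-3+t}{t}\geq\tfrac{4\log m-1}{2+\log m}$ (which is equivalent to $t\leq 2+\log m$), and using $n+1\geq n$---to $16m\lp2+\log m\rp\leq7\lp4\log m-1\rp\lp m+4+\log m\rp$, equivalently $\lp12\log m-39\rp m+7\lp4\log m-1\rp\lp\log m+4\rp\geq0$. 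This is clear for $m\geq26$; for $9\leq m\leq25$ the estimate $\lp12\log m-39\rp m\geq25\lp12\log m-39\rp$ turns it into the quadratic inequality $28\lp\log m\rp^2+405\log m-1003\geq0$, which holds because $\log m\geq\log9$; and the three cases $m\in\lb6,7,8\rb$ are checked by hand.

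The main obstacle is precisely the last range, $m+1\leq n<m+3+\log m$: there the inequality is essentially sharp---this is exactly why the constant in \cref{lowerboundforW} is $\tfrac78$ rather than $1$---so one has little room to spare, and a short finite verification for the smallest values of $m$ appears unavoidable. By contrast, the passage from \cref{technical} to the elementary inequality and the treatment of the other two ranges of $n$ are routine.
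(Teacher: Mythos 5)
Your argument is correct, and its main step---applying \cref{technical} at levels $m$ and $n$ and subtracting the upper bound at level $n$ from the lower bound at level $m$---is exactly the paper's. Where you differ is the elementary endgame. The paper finishes in two lines: it enlarges $2\log m+6$ to $8\log m$ in the numerator and then uses
\[
\frac{mn}{(n+1)(m+4+\log m)}>\frac{mn}{2m(n+1)}\geq\frac{7}{16}\qquad(m\geq6,\ n\geq7),
\]
which is the sole source of the constant $\frac{7}{8}$ (via $m+4+\log m<2m$ and $\frac{n}{n+1}\geq\frac{7}{8}$); strictly speaking that final multiplication needs $n-m-4\log m\geq0$, and in the complementary range the proposition is immediate anyway because its asserted lower bound is negative while $w_{m,n}\geq0$. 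You instead clear denominators and run a three-case analysis in $n$; I checked the reduction to $16mn(n-m-3-\log m)\geq7(n-m-4\log m)(m+4+\log m)(n+1)$, the monotone bound $\frac{t+3\log m-3}{t}\geq\frac{4\log m-1}{2+\log m}$ for $t\leq2+\log m$, the resulting quadratic in $\log m$, and the values $m=6,7,8$, and all of it holds, so your proof is valid and has the merit of handling the negative-numerator range explicitly rather than implicitly. However, your closing assessment is mistaken: whenever $n<m+4\log m$ (in particular throughout your ``delicate'' range $m+1\leq n<m+3+\log m$) the conclusion of \cref{lowerboundforW} is trivially true, since its right-hand side is negative, so that range does not force the constant $\frac{7}{8}$ and no finite verification is needed; with that one observation your case analysis collapses to the paper's short computation.
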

\begin{proof}
Apply \cref{technical} to see that
\begin{align*}
&w_{m,n}\lp \F_{p^r},f_{d,\alpha}\rp\\
&\qquad>\lp \frac{2}{\lp d-1 \rp\lp m+4+\log{m} \rp}
-\frac{8d}{p^{r/2}} \rp
- \lp \frac{2}{\lp d-1\rp \lp n+1 \rp}
+\frac{8d}{p^{r/2}} \rp\\
&\qquad=\frac{2n-2m -2\log m -6}{\lp d-1 \rp\lp n+1\rp\lp m+4+\log m \rp}
-\frac{16d}{p^{r/2}}\\
&\qquad>\frac{2n-2m-8\log m}{(d-1)(n+1)(m+4+\log m)}-\frac{16d}{p^r}
&&\text{(since $6<6\log{m}$)}.
\end{align*}
Since $5<m<n$, we observe
\[
\frac{mn}{(n+1)(m+4+\log m)} > \frac{mn}{(n+1)(2m)}\geq\frac{7}{16}.
\]
Thus, we see that
\[
\frac{2n-2m-8\log m}{(d-1)(n+1)(m+4+\log m)}-\frac{16d}{p^{r/2}}
> \frac{7}{16(d-1)} \lp \frac{2n-2m-8\log m}{mn} \rp - \frac{16d}{p^{r/2}}.
\]
\end{proof}
\noindent\cref{aaroncorollary} now follows immediately from \cref{upperboundforW} and \cref{lowerboundforW}.



%

\section{Averaging over polynomials}
\label{averaging}

We now compute statistics of our strictly preperiodic partitions over all quadratic polynomials.
We first prove \cref{orderofgrowthforW}, which is a simplification of \cref{upperboundforw}.
We use this simplification only to aid our proof of \cref{averageupperbound}.

\begin{corollary}\label{orderofgrowthforW}
Keep the hypotheses of \cref{upperboundforw}.
If $n>133$, then
\[
w_n\lp \F_{p^r}, f_{d,\alpha}\rp
<\frac{1}{n^{3/2}}+\frac{16d}{p^{r/2}}.
\]
\end{corollary}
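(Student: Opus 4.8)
The plan is to start from the bound given by \cref{upperboundforw}, namely
\[
w_n\lp \F_{p^r}, f_{d,\alpha} \rp
<\frac{2\log{(n+1)}+8}{\lp d-1 \rp\lp n+1 \rp \lp n +5+\log{\lp n +1\rp}\rp}
+\frac{16d}{p^{r/2}},
\]
and show that for $n>133$ and any $d\in\Z_{\geq2}$ the main term is bounded above by $n^{-3/2}$. Since $d-1\geq1$, it suffices to prove
\[
\frac{2\log{(n+1)}+8}{\lp n+1 \rp \lp n +5+\log{\lp n +1\rp}\rp}
<\frac{1}{n^{3/2}}.
\]

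First I would crudely simplify the left-hand side: the denominator exceeds $n\cdot n=n^2$ (since $n+1>n$ and $n+5+\log(n+1)>n$), so the left-hand side is at most $\lp 2\log(n+1)+8\rp/n^2$. Using $2\leq n$ (in fact $n>133$), the constant $8$ can be absorbed via $8<8\log(n+1)$, as in the proof of \cref{quadcor}, giving an upper bound of $10\log(n+1)/n^2$. Alternatively, and more directly, one can just bound $2\log(n+1)+8<2\log n + 8 + 2/n < 10\log n$ for $n$ large. Either way, it then remains to verify
\[
\frac{10\log n}{n^2}<\frac{1}{n^{3/2}},
\qquad\text{i.e.}\qquad
10\log n<n^{1/2}.
\]

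The final step is the elementary calculus estimate $10\log n < \sqrt n$ for $n>133$. I would handle this by noting the function $g(x)=\sqrt x-10\log x$ has derivative $g'(x)=\tfrac1{2\sqrt x}-\tfrac{10}{x}=\tfrac{\sqrt x-20}{2x}$, which is positive once $x>400$; so $g$ is increasing on $[400,\infty)$, and a direct check that $g(400)=20-10\log 400>0$ (since $\log 400<4.61$, so $10\log 400<46.1<20$ is \emph{false}—so one must instead pick the threshold more carefully). In fact the sharp crossover of $10\log n=\sqrt n$ is near $n\approx 5700$, so the bound $n>133$ is \emph{not} enough for the route through $10\log n/n^2$; the genuine argument must exploit the larger denominator $n+5+\log(n+1)$ rather than throwing it away as $n$. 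The real step is therefore: keep the denominator as $(n+1)\lp n+5+\log(n+1)\rp$, and show $\lp 2\log(n+1)+8\rp n^{3/2}<(n+1)\lp n+5+\log(n+1)\rp$, which for $n>133$ follows because the right side grows like $n^2$ while the left grows like $n^{3/2}\log n$; one pins down the constant $133$ as the precise point where this inequality first holds, verified by checking a single value of an increasing auxiliary function together with monotonicity of its derivative.

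The main obstacle here is purely the bookkeeping of the explicit threshold: the naive simplification that discards the logarithmic refinement in the denominator is too lossy to reach $n^{-3/2}$ at $n=133$, so I expect the actual proof to retain the $\log(n+1)$ term in the denominator (or to use the two-step split $n+1<n^{3/2}$ from the proof of \cref{quadcor} to reduce $10\log(n+1)/n^2$ to $15\log n/n^2$ and then note one still needs $15\log n<\sqrt n$, again failing at $133$—confirming the refined denominator is essential). Once the correct inequality $\lp 2\log(n+1)+8\rp n^{1/2}<n+5+\log(n+1)$ is isolated, verifying it for $n\geq134$ is a routine monotonicity argument: check it at $n=134$ and confirm the ratio of the two sides is monotone by differentiating.
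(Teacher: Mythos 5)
Your overall route is exactly the paper's: its entire proof of \cref{orderofgrowthforW} consists of asserting that for such $n$ one has $\frac{2\log(n+1)+8}{(d-1)(n+1)(n+5+\log(n+1))}<n^{-3/2}$ and then citing \cref{upperboundforw}, and you correctly reduce to this same inequality (using $d-1\geq 1$) and correctly observe that the crude simplification through $10\log n/n^{2}$ is too lossy. The gap is at the decisive step you defer: the claim that
\[
\bigl(2\log(n+1)+8\bigr)n^{3/2}<(n+1)\bigl(n+5+\log(n+1)\bigr)
\]
``first holds'' just past $n=133$, to be checked by one evaluation plus monotonicity, is never verified and is in fact false under the natural-logarithm convention you use in your own numerics (and which the paper itself needs elsewhere, e.g.\ the step $8<8\log(n+1)$ for $n\geq 3$ in the proof of \cref{quadcor}): at $n=134$ the left side is about $27600$ while the right side is about $19400$, and with $\log=\ln$ the inequality does not begin to hold until $n$ is roughly $370$. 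The threshold $133$ is reproduced exactly only if $\log$ is read as $\log_{10}$ (the inequality then fails at $n=133$ and holds at $n=134$, each by less than one percent), which is evidently how the constant was computed; so the verification you describe cannot be carried out as stated without settling this, and you have in effect bumped into a genuine inconsistency in the paper's conventions rather than resolved it.

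A second, smaller defect: your final ``isolated'' inequality $\bigl(2\log(n+1)+8\bigr)n^{1/2}<n+5+\log(n+1)$ is strictly stronger than what is required, since you replaced $n^{3/2}/(n+1)$ by $n^{1/2}$; because the margin at the threshold is under one percent, this lossy step fails at $n=134$ and $n=135$ even with base-$10$ logarithms, so the ``routine monotonicity argument'' cannot be completed in that form. To match the paper you should verify the displayed inequality itself, without weakening, by a single numerical check at the threshold together with a proof that the ratio of the two sides is increasing, and you should state explicitly which logarithm is meant --- noting that with the natural logarithm the stated threshold $n>133$ is not justified by \cref{upperboundforw} and would have to be raised to roughly $n>369$.
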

\begin{proof}
Indeed, for all such $n$,
\[
\frac{2\log\lp n+1\rp +8}
{\lp d-1 \rp\lp n+1\rp\lp n+5+\log\lp n +1\rp\rp}
< \frac{1}{n^{3/2}}.
\]
The result now follows from \cref{upperboundforw}.
\end{proof}

\cref{orderofgrowthforW} in hand, we now prove \cref{averageupperbound}.


\begin{proof}[Proof of \cref{averageupperbound}]
We begin by counting the number of quadratic polynomials that are conjugate to a given unicritical polynomial.
To this end, let's write
\[
\mathcal{Q}=\lb f\in\F_{p^r}[x]\mid\deg{\lp f \rp} = 2\rb
\qquad\text{and}\qquad
\mathcal{U}=\lb x^2+\delta\mid\delta\in\F_{p^r}\rb.
\]
Since $p$ is odd, for any $\alpha\in\F_{q^r}\setminus\lb0\rb$ and $\beta\in\F_{q^r}$ we may define the following coordinate change on $\F_{p^r}$:
\[
\mu_{\alpha,\beta}:X\mapsto\alpha X+\frac{\beta}{2}.
\]
Next, we set
\begin{align*}
\mu\colon\hspace{70px}\mathcal{Q}&\to\mathcal{U}\\
\alpha X^2+\beta X+\gamma&\mapsto X^2-\frac{\beta^2-4\alpha\gamma-2\beta}{4}.
\end{align*}
Then $\mu$ is surjective and $p^r\lp p^r-1\rp$-to-one.
Moreover, for any $f\in\mathcal{Q}$, say with $f(X)=\alpha X^2+\beta X+\gamma$, we see that
\[
\mu(f)=\mu_{\alpha,\beta}\circ f\circ\mu_{\alpha,\beta}^{-1};
\]
thus,
\[
\lv W_n\lp\F_{p^r},f\rp\rv
=\lv W_n\lp\F_{p^r},\mu(f)\rp\rv.
\]

Let's write $\lp \F_{p^r}\rp^{\text{prim}}$ for the set of $\alpha\in\F_{p^r}$ with $\F_{p}\lp \alpha \rp = \F_{p^r}$, and recall $\lv\F_{p^r}\setminus\lp \F_{p^r}\rp^{\text{prim}}\rv<2p^{r/2}$.
Then by the first paragraph of this proof, we see
\begin{align*}
&\frac{1}{|\mathcal{Q}|}
\sum_{f\in \mathcal{Q}}
w_n\lp \F_{p^r}, f\rp\\
&\qquad=\frac{p^r\lp p^r-1\rp}{p^{3r}-p^{2r}}
\sum_{f\in \mathcal{U}}
w_n\lp \F_{p^r}, f\rp\\
&\qquad<\frac{1}{p^r}
\lp\sum_{\delta\in \lp \F_{p^r}\rp^{\text{prim}}}
w_n\lp \F_{p^r}, x^2+\delta\rp
+\sum_{\delta \in\F_{p^r}\setminus\lp \F_{p^r}\rp^{\text{prim}}}
w_n\lp \F_{p^r}, x^2+\delta\rp \rp\\
&\qquad<\frac{1}{p^r}\lp p^r\lp\frac{1}{n^{3/2}}+\frac{32}{p^{r/2}} \rp+2p^{r/2}\rp
&&\hspace{-30px}\text{(by \cref{orderofgrowthforW})}\\
&\qquad=\frac{1}{n^{3/2}}+\frac{34}{p^{r/2}}.
\end{align*}
\end{proof}
\cref{averageupperbound} applies when $n>133$.
If we are willing to accept a higher threshold for $n$, we achieve \cref{strongest}, a stronger bound.

\begin{theorem}\label{strongest}
Suppose $p>3$ is prime.
Let $\epsilon\in \R_{>0}$ and $n,r\in\Z_{\geq 1}$.
Then there exists $N_{\epsilon}\in\Z_{>0}$ such that if $n>N_{\epsilon}$ and $r>2^{2n+3}$, then
\[
\frac{1}
{\lv\lb f\in\F_{p^r}[x]
\mid\deg{f}=2\rb\rv}
\cdot\sum_{\substack{f\in\F_{p^r}[x]\\
\deg{f}=2}}
{w_n\lp \F_{p^r}, f\rp}
<\frac{1}{n^{2-\epsilon}}
+\frac{34}{p^{r/2}}.   
\]
\end{theorem}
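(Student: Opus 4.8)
The plan is to mimic the proof of \cref{averageupperbound}, replacing the single crude simplification \cref{orderofgrowthforW} with a sharper (but asymptotically weaker-in-constant) bound valid for large $n$. The key observation is that in \cref{upperboundforw} the main term is
\[
\frac{2\log(n+1)+8}{(d-1)(n+1)(n+5+\log(n+1))},
\]
which, for $d=2$, is $\Theta\!\lp\frac{\log n}{n^2}\rp$. Since $\frac{\log n}{n^2}=o\!\lp\frac{1}{n^{2-\epsilon}}\rp$ for every fixed $\epsilon>0$, there is a threshold $M_\epsilon$ beyond which this main term is at most $\frac{1}{2n^{2-\epsilon}}$ (say); making this explicit is the content of the first step. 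Concretely, I would fix $\epsilon$, and choose $M_\epsilon$ large enough that $2\log(n+1)+8<\tfrac12(n+1)(n+5+\log(n+1))n^{-(2-\epsilon)}\cdot n^2$, i.e.\ large enough that $4(\log(n+1)+4)<n^\epsilon\,(n+5+\log(n+1))(n+1)/n$; this holds for all sufficiently large $n$ since the right side grows like $n^{1+\epsilon}$ while the left grows like $\log n$.

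Granting that step, the argument is essentially identical to the proof of \cref{averageupperbound}. First I would invoke the conjugacy bookkeeping from that proof verbatim: with $\mathcal{Q}$ and $\mathcal{U}$ as defined there, the map $\mu$ is $p^r(p^r-1)$-to-one and conjugation preserves $\lv W_n(\F_{p^r},\cdot)\rv$, so
\[
\frac{1}{|\mathcal{Q}|}\sum_{f\in\mathcal{Q}}w_n(\F_{p^r},f)
=\frac{1}{p^r}\sum_{\delta\in\F_{p^r}}w_n(\F_{p^r},x^2+\delta).
\]
Next I would split the sum over $\delta$ according to whether $\F_p(\delta)=\F_{p^r}$, using $\lv\F_{p^r}\setminus(\F_{p^r})^{\mathrm{prim}}\rv<2p^{r/2}$ for the non-primitive terms (each $w_n$ being trivially $\leq 1$) and applying the sharpened bound from the first step — which requires the hypotheses of \cref{upperboundforw} with $d=2$, namely $p>4$ (guaranteed by $p>3$) and the constraint on $r$, here $r>2^{2n+3}=2\cdot2^{2n+2}=2d^{2n+2}$ — to each primitive term, picking up the $\frac{16d}{p^{r/2}}=\frac{32}{p^{r/2}}$ correction. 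Taking $N_\epsilon=\max(M_\epsilon,1)$ and collecting terms yields
\[
\frac{1}{p^r}\lp p^r\lp\frac{1}{2n^{2-\epsilon}}+\frac{32}{p^{r/2}}\rp+2p^{r/2}\rp
=\frac{1}{2n^{2-\epsilon}}+\frac{34}{p^{r/2}}
<\frac{1}{n^{2-\epsilon}}+\frac{34}{p^{r/2}},
\]
which is the claimed bound.

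The only genuine content beyond \cref{averageupperbound} is the first step — producing the threshold $N_\epsilon$ — and that is a routine comparison of $\log n/n^2$ against $n^{-(2-\epsilon)}$; there is no real obstacle, just the (non-effective, since $N_\epsilon$ is only asserted to exist) dependence on $\epsilon$. One subtlety worth checking: I should make sure the chosen $N_\epsilon$ is large enough that $n>N_\epsilon$ forces $n>1$ so that $r>2^{2n+3}$ genuinely gives $r>2d^{2n+2}$ and hence the hypotheses of \cref{upperboundforw} are met; since we will take $N_\epsilon\geq 2$ anyway, this is automatic. I would also note in passing that one could take the slightly weaker split (bounding every primitive term by $\frac{1}{n^{2-\epsilon}}$ rather than $\frac12$ of it) if one instead absorbs the $2p^{r/2}/p^r=2/p^{r/2}$ tail and the $32/p^{r/2}$ into the stated $34/p^{r/2}$, but the factor-of-two margin above is the cleanest route.
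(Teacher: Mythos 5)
Your proposal is correct and takes essentially the same route as the paper: the paper likewise chooses $N_{\epsilon}$ so that the main term of \cref{upperboundforw} (with $d=2$) falls below $n^{-(2-\epsilon)}$ and then repeats the averaging argument of \cref{averageupperbound} verbatim. Your insistence on the extra margin $\tfrac{1}{2}n^{-(2-\epsilon)}$ per primitive term is a cosmetic variation, as you yourself observe.
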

\begin{proof}
Choose $N_{\epsilon}$ so that for any $n>N_{\epsilon}$
\[
\frac{2\log(n+1)+8}{(n+1)(n+5+\log(n+1))}
< \frac{1}{n^{2-\epsilon}}.
\]
The remainder of the proof is similar to that of \cref{averageupperbound}.
\end{proof}

Using \cref{upperboundforW} instead of \cref{orderofgrowthforW}, we prove \cref{genhead}.

\begin{proposition}\label{genhead}
Suppose $p>3$ is prime.
Let $r,m,n\in\Z_{\geq 1}$ with $1<m<n$.
If $r>2^{2n+1}$, then
\[
\frac{1}
{\lv\lb f\in\F_{p^r}[x]
\mid\deg{f}=2\rb\rv}
\cdot\sum_{\substack{f\in\F_{p^r}[x]\\
\deg{f}=2}}
{w_{m,n}\lp \F_{p^r}, f\rp}
<2\lp\frac{1}{m} - \frac{1}{n}\rp
+9\lp\frac{\log n}{mn}\rp.
\]
\end{proposition}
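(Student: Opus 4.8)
The plan is to mirror the proof of \cref{averageupperbound} almost verbatim, replacing the input \cref{orderofgrowthforW} with \cref{upperboundforW}. First I would set up the same notation: write $\mathcal{Q}=\lb f\in\F_{p^r}[x]\mid\deg{\lp f\rp}=2\rb$ and $\mathcal{U}=\lb x^2+\delta\mid\delta\in\F_{p^r}\rb$, recall the coordinate-change maps $\mu_{\alpha,\beta}$ and the $p^r\lp p^r-1\rp$-to-one surjection $\mu\colon\mathcal{Q}\to\mathcal{U}$, and use conjugation-invariance of the sets $W_{m,n}$ to reduce the average over $\mathcal{Q}$ to a sum over $\mathcal{U}$. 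Since $d=2$ here, the hypotheses $p>(d!)^2=4$ and $p\equiv1\pmod d$ of \cref{upperboundforW} are automatically satisfied whenever $p>3$ is prime, and the constraint $r>2d^{2n}=2^{2n+1}$ is exactly the hypothesis of the present proposition, so no arithmetic-progression restriction on $p$ leaks into the final statement.

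Next I would split the sum over $\mathcal{U}$ into the primitive part $\delta\in\lp\F_{p^r}\rp^{\text{prim}}$ and the non-primitive part, bounding the latter trivially using $\lv\F_{p^r}\setminus\lp\F_{p^r}\rp^{\text{prim}}\rv<2p^{r/2}$ and $w_{m,n}\leq1$. On the primitive part, \cref{upperboundforW} (with $d=2$) gives
\[
w_{m,n}\lp\F_{p^r},x^2+\delta\rp
<2\lp\frac1m-\frac1n+\frac{4\log n}{mn}\rp+\frac{32}{p^{r/2}}.
\]
Summing over all $\delta\in\F_{p^r}$ (absorbing the primitive bound over the full index set, which only weakens the inequality) and dividing by $p^r$ yields
\[
\frac{1}{|\mathcal{Q}|}\sum_{f\in\mathcal{Q}}w_{m,n}\lp\F_{p^r},f\rp
<2\lp\frac1m-\frac1n+\frac{4\log n}{mn}\rp+\frac{32}{p^{r/2}}+\frac{2p^{r/2}}{p^r}
=2\lp\frac1m-\frac1n\rp+\frac{8\log n}{mn}+\frac{34}{p^{r/2}}.
\]
I would then need to massage this into the stated bound $2\lp\frac1m-\frac1n\rp+9\lp\frac{\log n}{mn}\rp$; this requires trading the additive $34/p^{r/2}$ term and the gap between $8\log n/(mn)$ and $9\log n/(mn)$, i.e. checking that $\frac{34}{p^{r/2}}<\frac{\log n}{mn}$ under the hypothesis $r>2^{2n+1}$. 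Since $p\geq5$ and $r>2^{2n+1}$ force $p^{r/2}$ to be astronomically large compared to $mn\leq n^2$ (note $p^{r/2}\geq5^{2^{2n}}$ while $n^2$ is polynomial in $n$), this is a comfortable inequality for all $n\geq2$, though I would spell out the elementary estimate.

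The one genuine subtlety — and the step I expect to be the main obstacle — is the bookkeeping around the non-primitive $\delta$ and the factor $p^r\lp p^r-1\rp/(p^{3r}-p^{2r})=1/p^r$: one must be careful that \cref{upperboundforW} only applies to $\delta$ with $\F_p(\delta)=\F_{p^r}$, so the non-primitive terms must be handled by the crude bound $w_{m,n}\leq1$ exactly as in \cref{averageupperbound}, and that the resulting error $2p^{r/2}/p^r=2/p^{r/2}$ gets folded into the $34/p^{r/2}$ and then into the $9\log n/(mn)$ slack. Everything else is a routine transcription of the earlier averaging argument with $W_n$ replaced by $W_{m,n}$, so I would keep the write-up brief and point the reader to the proof of \cref{averageupperbound} for the repeated details.
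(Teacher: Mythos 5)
Your proposal is correct and matches the paper's own proof essentially step for step: the same reduction of the average over all quadratics to the unicritical family via the $p^r(p^r-1)$-to-one conjugation map, the same primitive/non-primitive split with the crude bound $w_{m,n}\leq 1$ on fewer than $2p^{r/2}$ non-primitive parameters, the application of \cref{upperboundforW} with $d=2$ (whose hypotheses indeed reduce to $p>3$), and the final absorption of $\frac{34}{p^{r/2}}<\frac{\log n}{mn}$ into the $9\frac{\log n}{mn}$ term using $r>2^{2n+1}$. No gaps; the paper simply writes the chain of inequalities you describe.
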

\begin{proof}
Keeping the same notation as the proof of \cref{averageupperbound}, note that
\begin{align*}
&\frac{1}{|\mathcal{Q}|}\sum_{f\in \mathcal{Q}}w_{m,n}\lp \F_{p^r}, f\rp\\
&\qquad<\frac{p^r\lp p^r-1\rp}{p^{3r}-p^{2r}}
\lp \sum_{\alpha\in \lp \F_{p^r}\rp^{\text{prim}}}
{w_{m,n}\lp \F_{p^r}, x^2+\alpha\rp}
+ \sum_{\alpha\in\F_{p^r}\setminus\lp \F_{p^r}\rp^{\text{prim}}}
{w_{m,n}\lp \F_{p^r}, x^2+\alpha\rp} \rp\\
&\qquad< \frac{1}{p^r}
\lp p^r\lp \frac{2}{m}-\frac{2}{n}+\frac{8\log{n}}{mn}+\frac{32}{p^{r/2}} \rp
+2p^{r/2}\rp
&&\hspace{-50px}\text{(by \cref{upperboundforW})}\\
&\qquad=\frac{2}{m}-\frac{2}{n} +\frac{8\log n}{mn}+\frac{34}{p^{r/2}}.
\end{align*}
And since $r>2^{2n+1}$, we conclude by noting that
\[
\frac{34}{p^{r/2}}<\frac{\log{n}}{mn}.
\]
    \end{proof}

Finally, we prove \cref{averagelowerboundmn}, completing the proof of \cref{genheadintro}.

\begin{proposition}\label{averagelowerboundmn}
Keep the hypotheses of \cref{genhead}, but assume $5<m$.
Then
\[
\frac{1}
{\lv\lb f\in\F_{p^r}[x]
\mid\deg{f}=2\rb\rv}
\cdot\sum_{\substack{f\in\F_{p^r}[x]\\
\deg{f}=2}}
{w_{m,n}\lp \F_{p^r}, f\rp}
> \frac{7}{8}\lp\frac{1}{m} - \frac{1}{n} \rp-4\lp\frac{\log m}{mn}\rp.
\]
\end{proposition}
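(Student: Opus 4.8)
The plan is to mirror the proof of \cref{genhead}, but to start from the \emph{lower} bound for $w_{m,n}$ rather than the upper bound. Concretely, keeping the notation $\mathcal{Q}$, $\mathcal{U}$, $\mu$, $\mu_{\alpha,\beta}$, and $\lp\F_{p^r}\rp^{\text{prim}}$ from the proof of \cref{averageupperbound}, I would first invoke the same conjugacy observation: $\mu$ is surjective and $p^r\lp p^r-1\rp$-to-one, and $\lv W_{m,n}\lp\F_{p^r},f\rp\rv=\lv W_{m,n}\lp\F_{p^r},\mu(f)\rp\rv$ for every $f\in\mathcal{Q}$. Hence the average over $\mathcal{Q}$ equals the average of $w_{m,n}\lp\F_{p^r},x^2+\delta\rp$ over $\delta\in\F_{p^r}$. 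Since all summands $w_{m,n}$ are nonnegative, discarding the terms with $\delta\notin\lp\F_{p^r}\rp^{\text{prim}}$ only decreases the sum, so
\[
\frac{1}{|\mathcal{Q}|}\sum_{f\in\mathcal{Q}}w_{m,n}\lp\F_{p^r},f\rp
\geq\frac{1}{p^r}\sum_{\delta\in\lp\F_{p^r}\rp^{\text{prim}}}w_{m,n}\lp\F_{p^r},x^2+\delta\rp.
\]

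Next I would apply \cref{lowerboundforW} with $d=2$ to each of the surviving terms. This is legitimate because the hypotheses of \cref{genhead} together with the new assumption $5<m$ supply exactly what \cref{lowerboundforW} needs: $p>3=(2!)^2$ fails to be strict, so I should note $p\geq5>4$, $p\equiv1\pmod 2$ automatically, $5<m<n$, $\F_p(\delta)=\F_{p^r}$ for $\delta\in\lp\F_{p^r}\rp^{\text{prim}}$, and $r>2^{2n+1}>2\cdot 2^{2n}=2d^{2n}$. Each term is then at least $\frac{7}{8}\lp\frac1m-\frac1n-\frac{4\log m}{mn}\rp-\frac{32}{p^{r/2}}$ (using $\frac{16d}{p^{r/2}}=\frac{32}{p^{r/2}}$ when $d=2$). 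Since $\lv\F_{p^r}\setminus\lp\F_{p^r}\rp^{\text{prim}}\rv<2p^{r/2}$, the number of surviving $\delta$ is at least $p^r-2p^{r/2}$, giving
\[
\frac{1}{p^r}\sum_{\delta\in\lp\F_{p^r}\rp^{\text{prim}}}w_{m,n}\lp\F_{p^r},x^2+\delta\rp
\geq\lp1-\frac{2}{p^{r/2}}\rp\lp\frac{7}{8}\lp\frac1m-\frac1n-\frac{4\log m}{mn}\rp-\frac{32}{p^{r/2}}\rp.
\]

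Finally I would clean up the error terms. Expanding, the right-hand side is $\frac78\lp\frac1m-\frac1n-\frac{4\log m}{mn}\rp$ minus a quantity bounded above by $\frac{32}{p^{r/2}}+\frac{2}{p^{r/2}}\cdot\frac78\cdot\frac1m+\frac{64}{p^r}$, all of which is $O\lp p^{-r/2}\rp$. Using $r>2^{2n+1}$ exactly as at the end of \cref{genhead}'s proof, $\frac{1}{p^{r/2}}$ is smaller than any fixed small constant times $\frac{\log n}{mn}$; in particular this whole error is at most, say, $\frac12\cdot\frac{\log m}{mn}$ (here I may want $n$, hence $m$, not too small, but $5<m$ and $m<n$ with $r>2^{2n+1}$ give plenty of room — this is where I'd be slightly careful with the numerology). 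Combining with $\frac{7}{8}\cdot\frac{4\log m}{mn}=\frac{7}{2}\cdot\frac{\log m}{mn}$, the total $\log$-term is bounded below by $-4\frac{\log m}{mn}$, yielding the claimed inequality and completing the proof of \cref{genheadintro}. The only mildly delicate point is the last step: verifying that the accumulated $p^{-r/2}$ errors really are absorbed into the slack between $\frac72$ and $4$ in the coefficient of $\frac{\log m}{mn}$, which requires the stated lower bounds on $m$ and $r$; everything else is a routine repetition of the averaging argument already used for \cref{averageupperbound} and \cref{genhead}.
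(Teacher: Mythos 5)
Your proposal follows exactly the route the paper intends: its proof of \cref{averagelowerboundmn} is nothing more than the instruction to repeat the argument of \cref{genhead} with \cref{lowerboundforW} in place of \cref{upperboundforW}, and that is what you do --- conjugation-invariance of $\lv W_{m,n}\rv$, reduction to the average of $w_{m,n}\lp\F_{p^r},x^2+\delta\rp$ over $\delta$, discarding the non-primitive $\delta$ by nonnegativity, applying \cref{lowerboundforW} with $d=2$ (your hypothesis check is right, except that $2^{2n+1}=2\cdot2^{2n}$, so $r>2d^{2n}$ holds with equality of thresholds rather than by a strict comparison, and $(2!)^2=4$, not $3$), and finally absorbing the $p^{-r/2}$ errors into the slack between $\tfrac{7}{2}$ and $4$ in the coefficient of $\frac{\log m}{mn}$, which $r>2^{2n+1}$ makes easy, exactly as at the end of \cref{genhead}.

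One step should be patched. The displayed inequality $\frac{1}{p^r}\sum_{\delta\in\lp\F_{p^r}\rp^{\text{prim}}}w_{m,n}\geq\lp1-\frac{2}{p^{r/2}}\rp L$, with $L=\frac{7}{8}\lp\frac1m-\frac1n-\frac{4\log m}{mn}\rp-\frac{32}{p^{r/2}}$, is obtained by replacing the factor $\lv\lp\F_{p^r}\rp^{\text{prim}}\rv/p^r$ by the smaller quantity $1-\frac{2}{p^{r/2}}$, which is only legitimate when $L\geq0$; and $L$ can be negative under the stated hypotheses (take $m=6$, $n=7$, where $n-m<4\log m$). This costs nothing: when $L<0$, the bound $\frac{32}{p^{r/2}}<\frac12\frac{\log m}{mn}$ (which you need anyway) shows the right-hand side of the proposition is itself negative, so $w_{m,n}\geq0$ gives the claim outright --- or, equivalently, use $\lv\lp\F_{p^r}\rp^{\text{prim}}\rv/p^r\leq1$ in that case. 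With that one-line case distinction, your write-up is a correct rendering of the paper's (unwritten) argument.
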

\begin{proof}
This follows by a similar argument to \cref{genhead}, using \cref{lowerboundforW} instead of \cref{upperboundforW}.
\end{proof}

\section*{Acknowledgements} \label{Acknowledgements}

We would very much like to thank John Caughman, who asked the question that led to this paper.
We would also like to thank the anonymous reviewer for many useful comments.

\bibliography{headwaters}
\bibliographystyle{amsalpha}

\end{document}